\documentclass[a4paper]{article}
\usepackage{amsthm,amssymb,amsmath,graphicx,psfrag,enumitem,mathrsfs}
\usepackage{algorithm}
\usepackage{tikz}
\usepackage{contour}

\usetikzlibrary{backgrounds,patterns}

\newtheorem{definition}{Definition}

\newtheorem{theorem}[definition]{Theorem}

\newtheorem{lemma}[definition]{Lemma}

\newcommand{\comment}[1]{}

\newcommand{\cP}{\mathcal{P}}
\newcommand{\cQ}{\mathcal{Q}}

\newcommand{\cA}{\mathcal{A}}

\newcommand{\cL}{\mathcal{L}}
\newcommand{\cR}{\mathcal{R}}

\newcommand{\EPP}{Erd\H{o}s-P\'osa property}

\newcommand{\sm}{\setminus}

\tikzstyle{vx}=[thick,circle,inner sep=0.cm, minimum size=2.mm, fill=white, draw=black]
\tikzstyle{svx}=[vx,fill=dunkelgrau]
\tikzstyle{tvx}=[vx,fill=black]
\tikzstyle{edg}=[draw,very thick]
\colorlet{hellgrau}{black!20!white}
\colorlet{dunkelgrau}{black!50!white}
\colorlet{hellblau}{blue!20!white}
\colorlet{hellrot}{red!40!white}

\title{Connecting vertex sets to walls}
\author{Henning Bruhn\thanks{The research leading to these results was partially supported by the Deutsche Forschungsgemeinschaft (DFG, German Research Foundation) -- 321904558} \and Felix Joos\thanks{The research leading to these results was partially supported by the Deutsche Forschungsgemeinschaft (DFG, German Research Foundation) -- 428212407.} \and Arthur Ulmer\footnotemark[1]}

\date{}

\begin{document}

\maketitle


\begin{abstract}
Menger's theorem on $A$--$B$-paths and Gallai's theorem on $A$-paths are among the most useful results in structural graph theory.
Many variants and extensions are known.
We add to this line of research and prove results that relate the maximal number of vertex-disjoint paths between vertex sets and a wall to the minimum number of vertices meeting all these paths.
We also include types of paths that start and end in the wall.
\end{abstract}

\section{Introduction}

Menger's theorem is one of the most fundamental theorems in graph theory:
either a graph contains $k$ disjoint paths connecting two vertex sets $A$ and $B$,
or there is a set of at most $k-1$ vertices that separates $A$ from $B$.
The theorem is applied in many proofs, often as a tool to piece together some larger 
structure. \emph{Walls} (see Figure~\ref{fig:elwall}, and the next section for 
formal definitions)
are often used in a similar role, as building blocks,
and often Menger's theorem and walls are used in conjunction.

What then happens in Menger's theorem if $B$ is a wall $W$?
In one sense, nothing special: unless a small vertex set separates $A$ from $W$,
there is a large set of disjoint paths linking $A$ to $W$. These paths, however,
can end wildly in~$W$, perhaps all in the same subdivided wall edge. 
If we use Menger's theorem and walls as building blocks, we typically would like
to have more control over the paths, for instance, that they attach in an orderly fashion 
to $W$, and only to its outside (to the nails, perhaps). 

A moment's thought makes it clear that this is an impossible demand, as the set $A$ could
already lie in the wall. If we allow $W$ to be replaced by a subwall, however, we obtain
a true statement, a sort of wall version of Menger's theorem:
\begin{theorem}\label{wallmengerthm}
	For all positive integers $s',t'$, 
	there are integers $t$ and $f$ such that the following holds.
	If $A$ is a vertex set, and~$W$ is a wall of size at least~$t$, 
	then either
	\begin{enumerate}[label=\rm(\roman*)]
		\item there is a vertex set $X$ of size at most~$f$ 
		that separates $A$ from the set of branch vertices of $W$; or
		\item there is a subwall $W'$ of size at least~$t'$ and a set of $s'$ disjoint $A$--$W'$-paths
such that each path ends in a different nail of $W'$. 
	\end{enumerate}
\end{theorem}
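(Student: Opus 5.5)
We first apply Menger's theorem to $A$ and the set $B$ of branch vertices of $W$. If some set of at most $f$ vertices separates $A$ from $B$, outcome (i) holds. Otherwise we obtain a family $\cP$ of $f+1$ disjoint $A$--$B$-paths, and we choose $f$ much larger than $s'$ and~$t'$. The task is then to reroute a sub-collection of $\cP$ so that its paths end in distinct nails of a suitable subwall $W'$.

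To do this, partition $W$ into a grid of $m\times m$ disjoint subwalls $W_{ij}$ of size $r$, with $t \ge mr$, together with the remaining rows and columns of $W$ between them. We may assume the paths of $\cP$ are internally disjoint from $B$, but they may still run through subdivided edges of $W$. Call a path of $\cP$ \emph{local} to $W_{ij}$ if it ends at a branch vertex of $W_{ij}$, and truncate each path at its first vertex in $W$. There are two cases.
\begin{itemize}
\item[(a)] Many paths (at least about $s'$ times a function of $t'$) first hit $W$ in vertices spread over many distinct subwalls $W_{ij}$ that are pairwise far apart. Choose $s'$ such paths hitting distinct, well-separated blocks. Take $W'$ to be a subwall built from rows and columns of $W$ chosen to avoid these blocks, and route each path, from its hitting point inside its block, along rows and columns of $W$ disjoint from the other blocks to a nail of $W'$. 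This requires the routing corridors to be disjoint from the paths of $\cP$. Since $\cP$ has $f+1$ paths and each path meets $W$ only in its endpoint after truncation, the corridors live entirely inside $W$ and are automatically disjoint from the truncated paths except at their ends.
\item[(b)] Otherwise, by pigeonhole, some block $W_{ij}$, or a bounded union of nearby blocks, receives many endpoints. Then we use that $W$ is large outside this region. Take $W'$ to be a subwall of size $t'$ far from the region, and apply Menger's theorem inside $W$ between the endpoints in the region and the $s'$ nails of $W'$ on its boundary row. Since a wall of size $n$ has no small separator between two sets of size $k\ll n$ lying in different parts (a standard linkage property of walls: any $k$ vertices can be linked to any $k$ vertices on a far boundary), we obtain $s'$ disjoint paths inside $W$. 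We then concatenate them with the corresponding truncated paths of $\cP$.
\end{itemize}

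In fact case (b) alone almost suffices, since the linkage property of walls works for any $s'$ endpoints in $W$. The main obstacle is that a wall has maximum degree $3$, so many endpoints can be crowded in one subdivided edge, and any separator around them in $W$ is then small. Menger inside $W$ could fail: for example, $s'$ endpoints on a single subdivided edge can be separated from the rest of $W$ by $2$ vertices. This is why the endpoints must be branch vertices. Since the paths of $\cP$ end at distinct branch vertices of $W$, each of degree $3$, the separation argument goes through. If $X\subseteq V(W)$ with $|X|<s'$ separated $s'$ distinct branch vertices from the nails of a far subwall $W'$, then deleting $X$ leaves a large component of $W$ containing most rows and columns, and each branch vertex separated from it requires its own deleted vertices, by a counting argument on the disjoint row/column pieces around distinct branch vertices. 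I expect making this counting precise to be the hardest step. I would handle it by choosing $s'\cdot 3$ endpoints that are pairwise far apart in $W$, which is possible by pigeonhole once $f$ is large relative to the size of neighbourhoods. Each chosen branch vertex then has three disjoint paths to a large grid region, so any separator of size less than $s'$ leaves some chosen vertices connected, and the usual Menger argument yields the linkage.
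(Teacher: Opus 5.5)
There is a genuine gap, and it is where you located the difficulty. It is not a counting issue, though: your argument relies on two properties of the paths in $\cP$ that cannot hold at the same time.

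\emph{Truncated paths.} To make the corridors automatically disjoint from the paths, and to keep the final paths off $W'-N'$, you truncate each path at its first vertex in $W$. But then the endpoints are arbitrary vertices of $W$, not branch vertices, and case (b) fails by your own example. Concretely, let $x_1,\dots,x_k$ be consecutive interior vertices of one subdivided edge $e$ of $W$. Put $A=\{a_1,\dots,a_k\}$ with edges $a_ix_i$, and add disjoint paths $R_i$, internally outside $W$, from $x_i$ to distinct branch vertices $b_i$. Then the paths $a_ix_iR_ib_i$ are a legitimate Menger family, so (i) fails for $k>f$. Yet all truncated endpoints lie on $e$, so case (a) does not apply and no $3s'$ of them are pairwise far apart. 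In case (b), the two ends of $e$ separate them from $W'$ inside $W$, so for $s'\ge 3$ no linkage inside $W$ exists.

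\emph{Untruncated paths.} If you keep the full paths, the endpoints really are distinct branch vertices. But the paths of $\cP$ are only internally disjoint from $B$, so they may run through interior vertices of arbitrarily many subdivided edges of $W$. That includes your corridors and $W'$ itself. A linkage found by Menger inside $W$ then need not avoid the other chosen paths, and the concatenations need be neither disjoint nor meet $W'$ only in a nail. Your sentence ``since the paths of $\cP$ end at distinct branch vertices \dots\ the separation argument goes through'' combines the untruncated endpoints with the disjointness that only truncation provides.

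The paper obtains Theorem~\ref{wallmengerthm} as the case $n=1$ of Theorem~\ref{connlem}, via Lemma~\ref{connlemX}(i), and it supplies the two ideas you are missing.
\begin{enumerate}
\item It chooses the Menger family to minimise $\sum_P|E(P)\sm E(W)|$. An uncrossing argument (Lemma~\ref{reroutelem}) then shows that every subwall containing no endpoint is disjoint from all the paths. So among many disjoint $c$-contained subwalls one can pick $W'$ avoiding all paths and all rows and columns that contain endpoints (Lemma~\ref{subwalllem}).
\item It never builds the $s'$ final paths by hand. Instead it applies Menger again in $G-(W'-N')$ between $A$ and the nails $N'$. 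A separator $X$ with $|X|<s'$ is refuted by a single $A$--$N'$-path avoiding $X$. This path is assembled from three pieces: one path $P\in\cP$ missing $X$; a brick line through an endpoint of $P$ that misses $X$ and $W'$ (Lemma~\ref{brickcollem}); and a $2$-connected frame $C$ that meets every row and column of $W$ and meets $W'$ only in its top row, where it contains two nails (Lemma~\ref{brickslem}).
\end{enumerate}
Because only one path has to dodge fewer than $s'$ vertices, the interference between $\cP$ and routes inside $W$ never has to be controlled.
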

(An $A$--$W'$-path has an endvertex in $A$, an endvertex in $W'$, and is internally
disjoint from $A\cup W'$.)

Observe that we cannot replace the branch vertices of $W$ by the vertex set of $W$ in~(i).
A set $A$ may be very well connected to a path between two branch vertices $b,b'$ of $W$,
and hence $A$ and the vertex set of $W$ cannot be separated by a set of size at most~$f$.
However, it may be possible that the branch vertices $b,b'$ already separate $A$ from all 
other branch vertices of $W$. Then there cannot be a large set of paths as in~(ii).

We find it useful to generalise Theorem~\ref{wallmengerthm}, so that not one but many 
vertex sets can be connected to the wall simultaneously. 
For this, 
let $\mathcal A$ be a set of $n$ vertex sets, 
and let $W$ be a wall with a set $N$ of nails.
Then a set $\mathcal P$ of disjoint paths is an \emph{$\mathcal A$--$W$-connector} of size $s$ 
if $\mathcal P$ can be partitioned into $n$ sets $\mathcal P_A$, $A\in\mathcal A$, 
such that $\mathcal P_A$ consists of at least $s$ (disjoint) $A$--$N$-paths that are disjoint from $W-N$;
see Figure~\ref{connectorfig} for an illustration.

Our first main result is:
\begin{theorem}\label{connlem}
	For all positive integers $n,s',t'$, 
	there are integers $t=t(n,s',t')$ and $f=f(n,s')$ such that the following holds.
	If $\mathcal A$ is a set of $n$ vertex sets, and~$W$ is a wall of size at least~$t$ with $B$ as its set of branch vertices, 
	then either
	\begin{enumerate}[label=\rm(\roman*)]
		\item there is $A\in\mathcal A$ and a vertex set $X$ of size at most~$f$ 
		such that $X$ separates $A$ from $B$; or
		\item there is a subwall $W'$ of size at least~$t'$ and there 
		is an $\mathcal A$--$W'$-connector of size at least~$s'$.
	\end{enumerate}
\end{theorem}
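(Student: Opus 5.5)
We plan to reduce Theorem~\ref{connlem} to Theorem~\ref{wallmengerthm} by induction on $n$. For the induction we carry a stronger invariant: after handling $A_1,\dots,A_i$ we hold a subwall $W_i$ of $W$ together with disjoint path systems $\cP_{A_1},\dots,\cP_{A_i}$, each of size $s'$, ending in distinct nails of $W_i$ and otherwise avoiding $W_i$. We also require $W_i$ to be large enough, of size $t_i$ with $t_n\ge t'$, to support the remaining steps. We choose $t_i$ by backward recursion, $t_{i}=t(s_{i+1},t_{i+1})$, where the inner function is the one from Theorem~\ref{wallmengerthm} and $s_{i+1}$ is inflated as described below. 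For $f$ we take $f(n,s')$ to be the maximum of the $f$-values from Theorem~\ref{wallmengerthm} plus an additive term accounting for the vertices of previously built paths, roughly $ns'$ times a path-length-independent constant. For this reason we will not delete whole paths, only their intersections with a bounded set of separators.

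\textbf{Step.} Suppose $W_{i}$ and $\cP_{A_1},\dots,\cP_{A_i}$ are given, and let $A=A_{i+1}$.

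First, we check that $A$ is not separated from the branch vertices of $W_i$ by a small set. If some set $X$ of size at most $f_0$ separates $A$ from the branch vertices of $W_i$, then we enlarge $X$ to separate $A$ from all of $B$. This works because $W_i$ is a subwall of a large wall $W$: every branch vertex of $W$ is joined to many branch vertices of $W_i$ through disjoint paths in $W$, so a vertex set separating $A$ from $B$ is obtained by adding only a bounded number of vertices. Concretely, we use the fact that if $X$ does not separate $A$ from $B$, then a path from $A$ to $B$ avoiding $X$ reaches a branch vertex of $W$, and from there $f_0+1$ disjoint paths lead into $W_i$. Thus outcome~(i) holds.

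Otherwise, we apply Theorem~\ref{wallmengerthm} to $A$ and $W_i$, with $s$ large compared to $is'$, to obtain a subwall $W_i'$ and many disjoint $A$--$W_i'$-paths ending at distinct nails. These new paths may intersect the old systems $\cP_{A_j}$. We handle this by working in $G$ minus the old paths' portions that lie outside $W_i$: these portions occupy few rows and columns, so we can pass to a subwall avoiding the nails used by old paths and re-route the old paths along wall paths to new nails of $W_{i+1}$.

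The main obstacle is exactly this interaction. The new $A$-paths may be forced through the old path systems, and the old paths must be extended through the wall to the nails of the new subwall $W_{i+1}$ without using $W_{i+1}-N$ and without hitting the new paths. Our first attempt at resolving it is to apply Theorem~\ref{wallmengerthm} in the graph $G'=G-\bigcup_{j\le i}V(\cP_{A_j})$ with the subwall $W_i$ (which the old paths meet only at nails). If this yields a small separator $X$ in $G'$, then $X\cup\bigcup V(\cP_{A_j})$ separates $A$ from $B$ in $G$, but this set is unbounded. So instead we apply Menger-type augmentation: reroute along the old paths, in the style of the standard augmenting-path proof. This produces a family of $is'+s$ disjoint paths from $A_1\cup\dots\cup A_{i+1}$ with the correct multiplicities, which we obtain by treating each $A_j$ with a separate copy/sink gadget so that Menger's theorem applies to the union. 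Once the paths are disjoint and end in distinct nails of some large subwall, we use the grid structure to select a subwall $W_{i+1}$ of size $t_{i+1}$, disjoint from all but the path endpoints. We then route each endpoint along disjoint wall paths to distinct nails of $W_{i+1}$, using the standard linkage lemma for walls: any $k$ vertices on the boundary region of a large wall can be linked disjointly to $k$ nails of a central subwall. Appending these routes gives the invariant for $i+1$. Carrying out this rerouting carefully, and ensuring that the bound on $f$ depends only on $n$ and $s'$, is where we expect most of the work.
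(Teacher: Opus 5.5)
Your induction takes Theorem~\ref{wallmengerthm} as a black box, but that theorem is exactly the case $n=1$ of the statement you are proving. The paper obtains it only as that special case, so your argument has no base case and assumes away the hard part.

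What is missing is a way to choose a subwall $W'$ with nails $N'$ such that a small set separating $A$ from $N'$ in $G-(W'-N')$ yields a small set separating $A$ from $B$. The paper does this in Lemma~\ref{connlemX}:
\begin{enumerate}[label=\rm(\alph*)]
\item It fixes many disjoint $A$--$B$-paths, normalised by Lemma~\ref{reroutelem}.
\item It chooses $W'$ by Lemma~\ref{subwalllem} so that $W'$ avoids these paths and every row and column of $W$ containing one of their ends.
\item If $|X|<s'$, one of these paths avoids $X$ and ends on a brick line (Lemma~\ref{brickcollem}) that avoids $X$ and $W'$.
\item The $2$-connected subgraph of Lemma~\ref{brickslem} avoids $X$, meets every row and column of $W$, and reaches two nails of $W'$. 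It therefore completes an $A$--$N'$-path avoiding $X$ and $W'-N'$.
\end{enumerate}
There is also a quantifier problem. Theorem~\ref{wallmengerthm} as stated lets $f$ depend on $t'$, so maximising its $f$-values over your recursively defined $t_i$ does not give $f=f(n,s')$.

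Granting the case $n=1$, the rest of your plan can be repaired, though not as written.

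\emph{Enlargement step.} Your justification fails: a branch vertex has degree $3$ in $W$, so no $f_0+1$ disjoint paths leave it. The claim is nevertheless true. Since $W_i$ spans more than $f_0$ rows and columns of $W$, every branch vertex of $W$ lying on a row or column of $W$ that avoids $X$ is joined in $W-X$ to a branch vertex of $W_i$. Hence every branch vertex reachable from $A$ in $G-X$ lies on both a row and a column meeting $X$. Adding these at most $2f_0^2$ vertices to $X$ separates $A$ from $B$.

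\emph{Disentangling step.} Here the order matters, and your sketch leaves it open.
\begin{enumerate}[label=\rm(\alph*)]
\item First extend the old disjoint family inside $W_i$, and the new family inside $W_i'$, separately to the nails $N''$ of a subwall $W''$ lying deep inside $W_i'$. The two extended families may meet each other; this is harmless.
\item Only then apply Menger in $G-(W''-N'')$, from $s'$ new vertices attached to each $A_j$ to the set $N''$.
\item A separator of size less than $(i+1)s'$ must spend $is'$ vertices on the disjoint old paths. It therefore misses a new vertex attached to $A_{i+1}$, contradicting that $A_{i+1}$ has $s\ge(i+1)s'$ disjoint paths to $N''$.
\end{enumerate}
So Menger never fails and no additive term in $f$ is needed. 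You should aim for $(i+1)s'$ paths rather than $is'+s$. This gadget argument is a valid and shorter substitute for Lemma~\ref{ABsortlem}, but it handles only the disentangling, which is the easier part of the theorem.
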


\begin{figure}[htb]
\centering
\begin{tikzpicture}[scale=0.6]

\def\height{2}

\newcommand{\nail}[1]{
\fill[dunkelgrau,rounded corners=1pt] (#1-0.35,\height-0.1) rectangle ++(0.7,0.2);
}

\pattern[pattern=bricks,pattern color=dunkelgrau] (0,0) rectangle (7,\height);
\draw[ultra thick, black, rounded corners=1pt] (0,0) to ++(0,\height) to ++(7,0) to ++(0,-\height);

\begin{scope}[opacity=0.5]
\draw[line width=0.5cm,dunkelgrau,out=-90,in=90] (1.5,2*\height) to (1,\height); 
\draw[line width=0.5cm,dunkelgrau,out=-90,in=90] (3.5,2*\height) to (3,\height); 
\draw[line width=0.5cm,dunkelgrau,out=-90,in=90] (5.5,2*\height) to (6,\height); 
\end{scope}

\fill[dunkelgrau,rounded corners=2pt] (1,2*\height) rectangle ++(1,0.4);
\fill[dunkelgrau,rounded corners=2pt] (3,2*\height) rectangle ++(1,0.4);
\fill[dunkelgrau,rounded corners=2pt] (5,2*\height) rectangle ++(1,0.4);

\nail{1}
\nail{3}
\nail{6}

\node at (1.2,\height+1) {$\mathcal P_2$};
\node at (3.2,\height+1) {$\mathcal P_1$};
\node at (5.8,\height+1) {$\mathcal P_3$};

\node at (0.6,2*\height+0.) {$A_2$};
\node at (2.6,2*\height+0.) {$A_1$};
\node at (6.4,2*\height+0.) {$A_3$};

\begin{scope}[shift={(9,0)}]
\pattern[pattern=bricks,pattern color=dunkelgrau] (0,0) rectangle (7,\height);
\draw[ultra thick, black, rounded corners=1pt] (0,0) to ++(0,\height) to ++(7,0) to ++(0,-\height);

\begin{scope}[opacity=0.5]
\draw[line width=0.5cm,dunkelgrau,out=90,in=90] (1,\height) arc [start angle=180, end angle=0, radius=1]; 
\draw[line width=0.5cm,dunkelgrau,out=90,in=90] (2,\height) arc [start angle=180, end angle=0, radius=2]; 
\draw[line width=0.5cm,dunkelgrau,out=90,in=90] (4,\height) to ++(0,0.3) arc [start angle=180, end angle=0, radius=0.5]
to ++(0,-0.3); 
\end{scope}

\fill[dunkelgrau,rounded corners=2pt] (3.8,\height+1.5) rectangle ++(0.4,1);
\fill[dunkelgrau,rounded corners=2pt] (4.3,\height+0.3) rectangle ++(0.4,1);
\fill[dunkelgrau,rounded corners=2pt,rotate=30] (2.5,\height-0.75) rectangle ++(0.4,1);

\nail{1}
\nail{2}
\nail{3}
\nail{4}
\nail{5}
\nail{6}

\node at (1.1,\height+0.5) {$\mathcal L_1$};
\node at (5.1,\height+0.5) {$\mathcal L_2$};
\node at (5.8,\height+1) {$\mathcal L_3$};

\node at (0.6,\height+1.3) {$A_1$};
\node at (3.4,\height+2.3) {$A_3$};
\node at (3.8,\height+1.) {$A_2$};

\end{scope}
\end{tikzpicture}
\caption{Left: An $\mathcal A$--$W$-connector. Right: An $\mathcal A$-linkage of $W$.
Note that the endvertices separate nicely. While this is not immediately required for connectors
and linkages, we will see later that this can easily be guaranteed.}\label{connectorfig}
\end{figure}

Gallai's theorem can be viewed as an analogue of Menger's theorem for \emph{$B$-paths},
non-trivial paths with both ends in $B$ that are internally disjoint from~$B$.
Gallai's theorem states that
whenever there do not exist $k$ disjoint $B$-paths in a graph $G$,
then there is a set of at most $2k-2$ vertices that intersects all $B$-paths.
Kakimura, Kawarabayashi and Marx~\cite{KKM11} generalised Gallai's theorem to 
\emph{$B$--$A$--$B$-paths}, $B$-paths that each meet a second vertex set~$A$ (possibly in
an endvertex).
\begin{theorem}[Kakimura, Kawarabayashi and Marx~\cite{KKM11}]\label{STSpath}
Let $A,B$ be two vertex sets\footnote{To be precise, Kakimura et al.\ only state this theorem for the case when $A,B$ are disjoint.
However, this condition can be easily removed;
given an instance where $A,B$ are not necessarily disjoint, construct a graph $H$ which arises from $G$ by subdividing every edge incident to a vertex in~$A$,
and let~$A'$ be the set of all these new vertices. 
Thus, $A',B$ are disjoint and an application of the theorem for disjoint vertex sets to $H$ and $A',B$ proves the result.} 
in a graph $G$, and let $k$ be a positive integer.
Then there are either $k$ disjoint $B$--$A$--$B$-paths in $G$ or a set of at most $2k-2$ vertices that intersects 
all $B$--$A$--$B$-paths.
\end{theorem}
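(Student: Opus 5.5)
Since $A$ and $B$ may be assumed disjoint by the subdivision trick in the footnote, I would derive the statement from Mader's $\mathcal S$-paths theorem. In the non-labelled form I intend to use, it says: for a graph $H$ and disjoint vertex sets $S_1,\dots,S_m$ with $S=\bigcup S_i$, the maximum number of disjoint $\mathcal S$-paths equals the minimum over $X\subseteq V(H)$ and edge sets $F$ of
\[
|X|+\sum_{C}\left\lfloor \tfrac{|\partial C|}{2}\right\rfloor ,
\]
where an $\mathcal S$-path is an $S$-path whose ends lie in different classes, and $C$ ranges over the components of the graph formed by $F$.

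The plan is to build an auxiliary graph $H$ together with a partition of a terminal set such that:
\begin{enumerate}[label=\rm(\alph*)]
\item disjoint $\mathcal S$-paths in $H$ correspond to disjoint $B$--$A$--$B$-paths in $G$;
\item conversely, a Mader certificate in $H$ translates into a hitting set in $G$.
\end{enumerate}
A natural attempt is to treat each $b\in B$ as its own class and to "force" passage through $A$ by a doubling construction. Take two copies $G^1,G^2$ of $G-A$, identify the copies of each $a\in A$, and let the terminals be $B^1\cup B^2$, with $b^1,b^2$ in a common class. A $B^1$--$B^2$ path then has to cross $A$.

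The difficulty is that a single vertex $v\notin A$ of $G$ could be used once in each copy. To handle this I would add, for every $v\in V(G)\sm A$, an extra structure: for instance, replace the pair $v^1,v^2$ by a gadget whose $\mathcal S$-path capacity is $1$, or uncross two paths sharing $v$ by rerouting. An uncrossing of this kind would either produce a $B$--$A$--$B$-path using fewer vertices, or shorten the family.

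Once the correspondence is in place, the hitting set comes from the min-max certificate $(X,F)$ exactly as in the derivation of Gallai's theorem from Mader's theorem. Let $k-1\ge\nu$, where $\nu$ is the maximum number of disjoint $B$--$A$--$B$-paths. For every component $C$ with $|\partial C|\ge 2$, add all but one vertex of $\partial C$ to $X$. The resulting set meets every $\mathcal S$-path. Its size is
\[
|X|+\sum_C\bigl(|\partial C|-1\bigr)\le 2\Bigl(|X|+\sum_C\left\lfloor \tfrac{|\partial C|}{2}\right\rfloor\Bigr)=2\nu\le 2k-2 ,
\]
using $m-1\le 2\lfloor m/2\rfloor$ for $m\ge1$. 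Finally, I would project this set back to $G$ by mapping copies and gadget vertices to their originals.

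The main obstacle is step (a)/(b): designing $H$ so that disjointness in $H$ really gives disjointness in $G$, without inflating the certificate when it is projected back. If the doubling gadget fails, my fallback is a direct Gallai-style induction on $|E(G)|$. Contract or delete an edge $e$ not incident to $A$. If the minimum hitting set grows beyond $2k-2$, then $e=uv$ lies in a tight configuration, and I would combine an optimal hitting set of $G/e$ and one of $G-e$ by a submodularity argument on separations. The bookkeeping for paths that meet $A$ only in an endvertex would be the delicate part there.
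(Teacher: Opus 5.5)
The paper does not prove this statement. It takes the disjoint case as a black box from Kakimura, Kawarabayashi and Marx and only supplies the subdivision reduction in the footnote, which you reuse correctly. So everything rests on your derivation of the disjoint case, and there the essential step is missing.

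\textbf{The doubling construction fails.} The correspondence (a)/(b) is the whole content of the theorem, and your construction breaks down before the vertex-reuse problem you mention even arises.
\begin{itemize}
\item With the classes $\{b^1,b^2\}$, every $B$-path of $G-A$, copied into $G^1$, is an $\mathcal S$-path, so nothing forces passage through $A$. For example, let $G$ be the path $bvc$ with $B=\{b,c\}$ and $A=\emptyset$. There is no $B$--$A$--$B$-path, yet $H$ has two disjoint $\mathcal S$-paths.
\item If you use the two classes $B^1,B^2$ instead, $\mathcal S$-paths are forced through $A$, but you are back to Menger and the count is unrelated to $\nu$. Let $G$ be the path $bva$ with $B=\{b\}$ and $A=\{a\}$. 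Then $H$ is the $B^1$--$B^2$-path $b^1v^1av^2b^2$. Taking disjoint copies makes $\nu(H)$ arbitrarily large while $\nu(G)=0$.
\item What you actually need is ends in different copies \emph{and} with different originals. No Mader partition can encode this, because ``same copy or same original'' is not transitive.
\end{itemize}
The capacity-one gadget and the uncrossing are never specified. The fallback induction on $|E(G)|$ is only a gesture. Neither closes the gap.

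A smaller point: your statement of Mader's min--max omits the constraint that every $\mathcal S$-path of $H-X$ uses an edge of $F$. Here $\partial C$ should be the set of vertices of $C$ lying in $S\setminus X$ or incident with edges of $E(H-X)\setminus F$. Without that constraint the minimum is trivially $0$. Your arithmetic turning a certificate into a hitting set of size at most $2\nu$ is fine.

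\textbf{A route that does work.} Use the group-labelled extension of Mader's theorem by Chudnovsky, Geelen, Gerards, Goddyn, Lohman and Seymour. It says that in a group-labelled graph there are either $k$ disjoint non-zero $B$-paths or at most $2k-2$ vertices meeting all of them.
\begin{enumerate}
\item Apply the footnote's subdivision to obtain a graph $G'$. In $G'$ the set $A$ is independent and disjoint from $B$.
\item Orient each edge $e$ incident with $A$ towards its end in $A$, and label it by its own basis vector $g_e$ of $\mathbb{Z}^{E(G')}$. All other edges get label $0$.
\item A $B$-path missing $A$ has weight $0$.
\item A $B$-path meeting $A$ enters and leaves each of its $A$-vertices (all internal, as $A\cap B=\emptyset$) through two distinct labelled edges. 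Since $A$ is independent, these edges are pairwise distinct over the whole path. So the weight is a sum $\sum(g_e-g_{e'})$ over pairwise distinct edges, which is non-zero.
\end{enumerate}
Thus the non-zero $B$-paths are exactly the $B$--$A$--$B$-paths. The theorem then follows at once, transferring back through the subdivision as in the footnote, with no doubling needed.
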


We may now ask the same question as above: what happens if $B$ is a wall? Can we force the $B$--$A$--$B$-paths
to attach only to the nails of the wall?

Let $\cA$ be a set of~$n$ vertex sets, let $W$ be a wall, and let $N$ be a set of nails of~$W$.
A \emph{linkage} $\cL$ of~$W$ is a set of disjoint $N$-paths that are internally disjoint from $W-N$.
A linkage~$\mathcal L$ of a wall $W$ is an \emph{$\mathcal A$-linkage of $W$} of size at least $s$ 
if~$\cL$ can be partitioned into $n$ linkages $\cL_A$, $A\in\mathcal A$, of $W$ 
such that for each~$A\in\mathcal A$,
the linkage~$\mathcal L_A$ consists of at least~$s$ paths, each of which is an $N$--$A$--$N$-path.
Our second main result is about $\cA$-linkages of walls, and should be viewed as a wall 
version of the theorem by Kakimura, Kawarabayashi and Marx.


\begin{theorem}\label{linkagelem}
For all positive integers $n,s',t'$, 
there are integers $t=t(n,s',t')$ and $f=f(n,s')$ such that the following holds.
If $\mathcal A$ is a set of $n$ vertex sets, and if $W$ is a wall of size at least~$t$
with $B$ as its set of branch vertices, then either
\begin{enumerate}[label=\rm(\roman*)]
\item there is $A\in\mathcal A$ and a vertex set $X$ of size at most~$f$ 
such that $X$ meets every $B$--$A$--$B$-path; or
\item there is a subwall $W'$ of size at least~$t'$ and there 
is an $\mathcal A$-linkage of $W'$ of size at least~$s'$.
\end{enumerate}
\end{theorem}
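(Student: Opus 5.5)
We prove Theorem~\ref{linkagelem} by reducing it to Theorem~\ref{connlem}, with Theorem~\ref{STSpath} supplying the large families of $B$--$A$--$B$-paths. Choose $f$ much larger than the $f$ of Theorem~\ref{connlem} (as a function of $n$ and a suitably enlarged $s''$), and choose $t$ large. Suppose~(i) fails. Then for every $A\in\cA$, Theorem~\ref{STSpath} gives a family $\cQ_A$ of many, say $k\gg s'$, disjoint $B$--$A$--$B$-paths. We may assume each of them is internally disjoint from $W$: a $B$--$A$--$B$-path that passes through a branch vertex can be split at that vertex, and if it meets a subdivided edge only internally, we reroute along the wall edge.

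Next we cut each such path at a vertex $a\in A$ on it. This gives two $\{a\}$--$B$ halves. The key point is that the failure of~(i) also rules out a small set separating $A$ from $B$, since such a set meets every $B$--$A$--$B$-path. So the hypothesis of Theorem~\ref{connlem} fails for $A$. But we need more than that: we need to route \emph{pairs} of paths from a single point of $A$, i.e.\ two disjoint paths to the wall joined through $A$. To get this, define for each $A$ an auxiliary set $A^*$, namely the vertex set of $\bigcup\cQ_A$ with the endvertices in $B$ removed, or better, handle each path $Q\in\cQ_A$ via its middle vertex. I would argue as follows. If some set $X$ of size at most $f'$ separates $A^*$ from $B$, then $X$ meets at most $f'$ paths of $\cQ_A$. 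The remaining paths have both ends in $B$ away from $X$, and their $A$-vertices are then connected to $B$ avoiding $X$, which is a contradiction. Hence Theorem~\ref{connlem}, applied to the sets $A^*$ with $s''=2s'$ (or larger), yields a subwall $W''$ and a connector with many paths from each $A^*$ to the nails of $W''$.

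The main obstacle is turning connector paths that end on $\bigcup\cQ_A$ into genuine $N$--$A$--$N$-paths. Two connector paths landing on the same $Q\in\cQ_A$ on opposite sides of (or at) its $A$-vertex combine through $Q$ into a nail--$A$--nail path. Paths landing on different $Q$'s, or on the same side, do not combine directly, and the connector paths may also cross other $Q$'s. I would first try to fix this with a pigeonhole or cleaning step. Make $k$ and $s''$ large and apply Theorem~\ref{connlem} to the $2k$ half-path sets separately, with $A^*$ replaced by the halves $Q^-\cup\{a_Q\}$ and $Q^+\cup\{a_Q\}$. Then use a Menger-type augmenting argument inside the union of connector paths and $\cQ_A$, where paths reaching distinct halves are rerouted along the halves through the $A$-vertices, so that each resulting component contains an $N$--$A$--$N$-path. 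If this gets too messy, the fallback is a direct argument in the style of Gallai and Kakimura--Kawarabayashi--Marx, on an auxiliary graph in which the nails of a large subwall form the terminal set. In that approach one must show that a small hitting set for nail--$A$--nail paths, together with a few wall vertices, yields a small hitting set for $B$--$A$--$B$-paths, again using Theorem~\ref{connlem} to locate a good subwall.

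Finally, the $n$ linkages for the different $A$ have to be disjoint, and the paths must attach only at nails of $W'$. Disjointness is inherited from the connector, since the connector paths for distinct sets are disjoint. Attachment at nails follows by passing to a subwall $W'\subseteq W''$ of size $t'$ that avoids the $W''$-nails used only partially. Here $t=t(n,s',t')$ is taken as the $t$ of Theorem~\ref{connlem} for the enlarged parameters.
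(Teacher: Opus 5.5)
\textbf{There is a genuine gap.} The step that carries all the difficulty is not proved. After applying Theorem~\ref{connlem} to the sets $A^*$, you only have, for each $A$, disjoint paths from arbitrary vertices of $\bigcup\cQ_A$ to nails of $W''$. You then list candidate repairs (pigeonhole, an unspecified Menger-type augmentation, a Gallai-style fallback) but carry none of them out. A connector says nothing about where its paths land relative to the vertices $a_Q$. To get a nail--$A$--nail path you need two connector paths ending on the same $Q$, on opposite sides of $a_Q$, with the segment of $Q$ between them unused by every other path. Nothing forces this.

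The one concrete repair you suggest, applying Theorem~\ref{connlem} to the halves $Q^-\cup\{a_Q\}$ and $Q^+\cup\{a_Q\}$ separately, fails at its hypothesis. Failure of~(i) only says that no small set meets \emph{all} of $\cQ_A$. A single half can be separated from $B$ by two vertices, for instance when the inner vertices of $Q$ have no neighbours outside $Q$. So Theorem~\ref{connlem} may simply return its outcome~(i) for some half.

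Your claim that disjointness across $\cA$ is inherited from the connector is also false. The paths you would build use segments of the $Q\in\cQ_A$, which are not connector paths. These segments may meet connector paths and paths of $\cQ_{A'}$ for other $A'\in\cA$. The paper needs a separate disentangling step for exactly this: Lemma~\ref{disjlink}, which is why it first produces $3^ns'$ paths for each set.

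\textbf{The missing idea is how to defeat a small hitting set directly; the paper does this without connectors.} In Lemma~\ref{connlemX}(ii):
\begin{enumerate}
\item Via Lemmas~\ref{reroutelem} and~\ref{subwalllem}, choose a $c$-contained subwall $W'$ avoided by many disjoint $B$--$A$--$B$-paths, and by all rows and columns containing their ends.
\item If $W'$ has no $A$-linkage of size $s'$, Theorem~\ref{STSpath}, applied with the nails $N'$ as terminals in $G-(W'-N')$, yields $X$ with $|X|<2s'$ meeting all $N'$--$A$--$N'$-paths.
\item Lemma~\ref{brickslem} gives a $2$-connected $C\subseteq W-X$ that meets every row and column of $W$, contains two nails, and touches $W'$ only in its top row.
\item Lemma~\ref{brickcollem} gives some $P\in\cP_A$ avoiding $X$ whose ends lie on brick lines $L_1,L_2$ avoiding $X$ and $W'$.
\item Then $C\cup L_1\cup L_2\cup P$ is $2$-connected and contains two nails and a vertex of $A$. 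Hence it contains an $N'$--$A$--$N'$-path missing $X$, a contradiction.
\end{enumerate}
Your fallback points in this direction. Without this $2$-connected frame construction, and the disentangling step above, it is not a proof.
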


In fact, we can also combine Theorems~\ref{connlem} and~\ref{linkagelem} into a single result.
As this is slightly more technical, we defer its statement (Theorem~\ref{labeltheorem})
to the end of the article.

We apply Theorem~\ref{labeltheorem} in another article~\cite{BJU:21} concerning the \EPP{} of labelled graphs that contain a fixed labelled graph as a minor.

In Section 2, we introduce all necessary definitions. In Section 3, we provide a way to connect vertex sets to a wall or to disconnect one of these sets from that wall. 
In the last two sections, given such a set of paths, we clean these paths up, that is, make them disjoint and then sort the endvertices. With these results, we then easily 
prove Theorem~\ref{connlem} and Theorem~\ref{linkagelem}. Finally, as stated, we combine these two theorems into a single, more general result and prove it.

\section{Walls and linkages}

We follow Diestel~\cite{diestelBook17} for general graph-theoretic definitions.
In particular, given two vertex sets $A$, $B$, an \emph{$A$--$B$-path} is a path 
with an endvertex in $A$ and the other in $B$, and is internally
disjoint from $A$ and $B$. A vertex in $A\cap B$
counts as a (trivial) $A$--$B$-path.
In this section, we list some more notions that we need throughout the article.

An \emph{elementary wall $W$ of size $n$} arises
from the following graph by removing the vertices of degree~$1$:
take $\{v_{i,j}:i\in [n+1], j\in [2n+2]\}$ as the vertex set and 
consider all $v_{i,j}v_{k,\ell}$ to be edges, where $i\leq k$ and either $i=k$ and $|j-\ell| =1$, or $i=k-1$, $j=\ell$, and $j$ has the same parity as $i$;
see Figure~\ref{fig:elwall}.
A \emph{brick} is a cycle of length $6$.
The \emph{$i^\text{th}$ row} of an elementary wall is the induced subgraph on $v_{i,1},\ldots, v_{i,2n+2}$ for $i\in [n+1]$ (ignore the vertices that have been removed); 
this is a path. 
There is exactly one set of $n+1$ disjoint paths between the first row and the $(n+1)^\text{th}$ row; these paths are the \emph{columns} of an elementary wall. Starting at $v_{1,1}$, we can order the endvertices of the columns on the first row. The \emph{$j^\text{th}$ column} is the $j^\text{th}$ path in this order. 
We call the first row the \emph{top row} and the \emph{nails} are the vertices of degree $2$ in the top row that are adjacent to a vertex of degree $3$. 

\begin{figure}[htb]
	\centering
	\begin{tikzpicture}
		\tikzstyle{vx}=[thick,circle,inner sep=0.cm, minimum size=1.6mm, fill=white, draw=black]
		\tikzstyle{marked}=[line width=3pt,color=dunkelgrau]
		\tikzstyle{point}=[thin,->,shorten >=2pt,color=dunkelgrau]
		
		\def\wallheight{8}
		\def\brickheight{0.4}
		
		\pgfmathtruncatemacro{\lastrow}{\wallheight}
		\pgfmathtruncatemacro{\penultimaterow}{\wallheight-1}
		\pgfmathtruncatemacro{\lastrowshift}{mod(\wallheight,2)}
		\pgfmathtruncatemacro{\lastx}{2*\wallheight+1}

		\draw[edg] (\brickheight,0) -- (2*\wallheight*\brickheight+\brickheight,0);
		\foreach \i in {1,...,\penultimaterow}{
			\draw[edg] (0,\i*\brickheight) -- (2*\wallheight*\brickheight+\brickheight,\i*\brickheight);
		}
		\draw[edg] (\lastrowshift*\brickheight,\lastrow*\brickheight) to ++(2*\wallheight*\brickheight,0);
		
		\foreach \j in {0,2,...,\penultimaterow}{
			\foreach \i in {0,...,\wallheight}{
				\draw[edg] (2*\i*\brickheight+\brickheight,\j*\brickheight) to ++(0,\brickheight);
			}
		}
		\foreach \j in {1,3,...,\penultimaterow}{
			\foreach \i in {0,...,\wallheight}{
				\draw[edg] (2*\i*\brickheight,\j*\brickheight) to ++(0,\brickheight);
			}
		}

		\def\colind{5}
		\foreach \j in {2,4,6}{
			\draw[marked] (\colind*\brickheight,\j*\brickheight-2*\brickheight) -- ++ (0,\brickheight) -- ++(-\brickheight,0) -- ++(0,\brickheight) -- ++(\brickheight,0);
		}
		\draw[marked] (\colind*\brickheight,6*\brickheight) -- ++ (0,\brickheight) -- ++(-\brickheight,0) -- ++(0,\brickheight);
		
		\def\rowind{5}
		\foreach \i in {1,...,\lastx}{
			\draw[marked] (\i*\brickheight-\brickheight,\rowind*\brickheight) -- ++(\brickheight,0);
		}
		
		\draw[marked] (2*\wallheight*\brickheight,1*\brickheight) -- ++(0,\brickheight) coordinate[midway] (brx)
		-- ++(-2*\brickheight,0)
		-- ++(0,-\brickheight) -- ++(2*\brickheight,0);

		\foreach \i in {1,...,\lastx}{
			\node[vx] (w\i w0) at (\i*\brickheight,0){};
		}
		\foreach \j in {1,...,\penultimaterow}{
			\foreach \i in {0,...,\lastx}{
				\node[vx] (w\i w\j) at (\i*\brickheight,\j*\brickheight){};
			}
		}
		\foreach \i in {1,...,\lastx}{
			\node[vx] (w\i w\lastrow) at (\i*\brickheight+\lastrowshift*\brickheight-\brickheight,\lastrow*\brickheight){};
		}
		
		\foreach \i in {2,4,...,\lastx}{
			\node[vx,fill=white] (w\i w\lastrow) at (\i*\brickheight+\lastrowshift*\brickheight-\brickheight,\lastrow*\brickheight){};
		}
		
		\node[anchor=mid] (tr) at (\lastx*\brickheight+0.5,\wallheight*\brickheight+0.8){top row};
		\draw[point,out=270,in=0] (tr) to (w\lastx w\wallheight);
		
		\node[anchor=mid] (nails) at (\lastx*\brickheight-1,\wallheight*\brickheight+0.8){nails};
		\draw[point,out=180,in=90] (nails) to (w10w\wallheight);
		\draw[point,out=190,in=90] (nails) to (w12w\wallheight);
		\draw[point,out=200,in=90] (nails) to (w14w\wallheight);
\foreach \i in {2,4,...,16}{
\node[vx,fill=black] at (w\i w\wallheight){};
}
		
		\node[anchor=mid] (vp) at (0,\wallheight*\brickheight+0.8){column};
		\draw[point,out=0,in=90] (vp) to (w\colind w\wallheight);
		
		\node[align=center] (hp) at (\lastx*\brickheight+1.2,\rowind*\brickheight+0.8){row};
		\draw[point,out=270,in=0] (hp) to (w\lastx w\rowind);
		
		\node[align=center] (br) at (\lastx*\brickheight+1.2,1*\brickheight+0.8){brick};
		\draw[point,out=270,in=0] (br) to (brx);

\begin{scope}[on background layer]
\fill[hellgrau] (w6w1.center) -- (w12w1.center) -- (w12w2.center) -- (w13w2.center) 
--(w13w3.center) --(w12w3.center) --(w12w4.center) --(w6w4.center) 
--(w6w3.center) --(w7w3.center) --(w7w2.center) --(w6w2.center) -- cycle;
\end{scope}
		

\node[inner sep=1pt] (sub) at (3.25,-0.5) {subwall};
\coordinate (subsub) at (3.45,0.6);
\draw[very thick,shorten >=2pt,color=white,out=75,in=-110] (sub) to (subsub);
\draw[point,out=75,in=-110] (sub) to (subsub);

	\end{tikzpicture}
	\caption{An elementary $8$-wall}
	\label{fig:elwall}
\end{figure}

A \emph{wall of size $n$}, or \emph{$n$-wall}, 
is a subdivision of an elementary wall of size~$n$. Except for nails, all the definitions for elementary walls can be extended to walls in a natural way. In some special cases, the nails can be chosen in a natural way, too. We will explain that in a moment. The \emph{branch vertices} of a wall are the vertices of degree $3$.

We say that a wall $W_0$ is a \emph{subwall} of another wall $W$ if $W_0\subseteq W$ and there is a set of consecutive rows and columns of $W$ that give rise to $W_0$.
The subwall~$W_0$ is \emph{$t$-contained} if it is disjoint from the first and last $t$ rows and columns of~$W$. 

In the top row of a wall $W_0$, there is always at least one vertex of degree~$2$ in every brick of $W_0$ that could be a nail of the underlying elementary wall that gives rise to $W_0$. 
Any set consisting of at most one such vertex from each brick of $W_0$ is a set of \emph{nails} of $W_0$.
If $W_0$ is a $1$-contained subwall of a wall~$W$,
then we can define the nails of $W_0$ in a more natural way. 
Between any two consecutive vertices in the top row that are adjacent to a vertex in $W_0$ outside the top row, 
there is exactly one branch vertex of $W$.
In a $t$-contained subwall (for $t\geq 1$) we always assume that the nails are chosen such that they are branch vertices of the ambient wall.

A \emph{linkage} $L$ of a wall $W$ is a set of disjoint paths 
such that each path in $L$ starts and ends at two distinct nails of $W$ and is otherwise disjoint from $W$. We refer to $|L|$ as the 
\emph{size} of $L$.
The top row of a wall imposes a natural ordering $<$ on the vertices in the top row (in fact, two; we fix one). 
We say that a vertex in the top row is to the \emph{left} of another vertex in the top row if it has a smaller order;
otherwise it is to the \emph{right} (provided the two vertices are distinct).
Thus, each linkage path has a left and a right endvertex.
A tuple $(L_1,\ldots,L_n)$ of linkages is \emph{path-disjoint} if 
for all distinct $i,j\in[n]$, no path in $L_i$ intersects any path in $L_j$.

\begin{figure}[htb]
\centering
\begin{tikzpicture}

\tikzstyle{oben}=[line width=1.5pt,double distance=1.2pt,draw=white,double=black]

\begin{scope}
\def\radius{0.25}
\def\step{0.3}
\draw[edg] (0,0) -- (3,0);
\draw[edg] (0.2,0) -- ++(0,0.2) arc (180:0:\radius) -- ++(0,-0.2);
\draw[edg] (0.2+2*\radius+\step,0) -- ++(0,0.2) arc (180:0:\radius) -- ++(0,-0.2);
\draw[edg] (0.2+4*\radius+2*\step,0) -- ++(0,0.2) arc (180:0:\radius) -- ++(0,-0.2);
\node at (2.7,0.3) {{\bf \dots}};
\node at (1.5,-0.3) {(a) in-series};
\end{scope}

\begin{scope}[shift={(3.5,0)}]
\draw[oben] (0.2,0) arc (180:0:0.75);
\draw[oben] (0.2+0.3,0) arc (180:0:0.75);
\draw[oben] (0.2+2*0.3,0) arc (180:0:0.75);
\draw[edg] (0,0) -- (3,0);
\node at (2.7,0.3) {{\bf \dots}};
\node at (1.5,-0.3) {(b) crossing};
\end{scope}

\begin{scope}[shift={(7,0)}]
\draw[edg] (0,0) -- (3,0);
\draw[edg] (0.2,0) arc (180:0:1.3);
\draw[edg] (0.2+0.3,0) arc (180:0:1.0);
\draw[edg] (0.2+2*0.3,0) arc (180:0:0.7);
\node at (1.3,0.3) {{\bf \dots}};
\node at (1.5,-0.3) {(c) nested};
\end{scope}

\end{tikzpicture}
\caption{The three types of pure linkages}\label{linkagefig}
\end{figure}

Let $L$ be a linkage, let $P_1,P_2\in L$, and let $\ell_i$ and $r_i$ be the left and right endvertices, respectively, 
of $P_i$ for each $i\in [2]$.
Then $L$ is \emph{nested} if for all choices of $P_1,P_2\in L$, it holds that $\ell_1<\ell_2$ implies $\ell_2<r_2<r_1$. It is \emph{crossing} if for all choices of $P_1,P_2\in L$, it holds that $\ell_1<\ell_2$ implies $\ell_2<r_1<r_2$. Lastly, it is \emph{in-series} if $\ell_1<\ell_2$ implies $\ell_1<r_1<\ell_2<r_2$.
The linkage $L$ is \emph{pure} if it is nested, crossing, or in-series.

\section{Paths and subwalls}

Given a wall and a set of disjoint paths $\cP$ that are connected to the wall in a certain way,
the following lemma shows that, 
without additional assumptions, we can modify $\cP$ to obtain a set of paths $\cP'$ with the same properties that intersect the wall only in a small part.

\begin{lemma}\label{reroutelem}
Let $W$ be a wall, and let $B$ be its set of branch vertices. Let $A$ be a vertex set, 
and let $\mathcal P$ be a set of disjoint $A$--$B$-paths, or a set of disjoint
$B$--$A$--$B$-paths. Then there is a set $\mathcal P'$ 
of disjoint paths with $|\mathcal P'|=|\mathcal P|$
such that 
\begin{enumerate}[label=\rm(\roman*)]
\item whenever a subwall $W'$ of $W$ does not contain any endvertex of any path in $\mathcal P'$,
$W'$ is also disjoint from every path in $\mathcal P'$; and
\item if the paths in $\mathcal P$ are $A$--$B$-paths/$B$--$A$--$B$-paths, then the paths 
in $\mathcal P'$ are $A$--$B$-paths/$B$--$A$--$B$-paths, too.
\end{enumerate}
\end{lemma}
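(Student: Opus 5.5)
The plan is to choose $\mathcal P'$ by an extremal argument and then show that any violation of~(i) allows a strict improvement. Among all sets $\mathcal Q$ of $|\mathcal P|$ disjoint paths of the same type as $\mathcal P$ (all $A$--$B$-paths, or all $B$--$A$--$B$-paths), pick $\mathcal P'$ minimising
\[
\phi(\mathcal Q)=\bigl|E\bigl(\textstyle\bigcup\mathcal Q\bigr)\setminus E(W)\bigr|,
\]
the number of edges used by the paths that are not edges of the wall. Such sets exist, since $\mathcal P$ itself is one, so~(ii) holds automatically. It remains to show~(i).

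Suppose, for a contradiction, that a subwall $W'$ contains no endvertex of any path in $\mathcal P'$ but meets some path. The first step is to find a vertex at which we can reroute cleanly. Every vertex of $W'$ lies on a subdivided edge $S_0$ of $W'$ joining two branch vertices of $W'$; these are branch vertices of $W$, so they lie in $B$. Fix a vertex of $\bigcup\mathcal P'$ in $W'$ and an end $b$ of the subdivided edge through it. Let $v$ be the vertex of $\bigcup\mathcal P'$ on that edge closest to $b$, and let $S$ be the $v$--$b$ segment. If $v=b$, then $b\in B$ lies on a path, which forces $b$ to be an endvertex, because the paths are internally disjoint from $B$. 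That contradicts the choice of $W'$, so $v\neq b$. Also $S-v$ avoids $\bigcup\mathcal P'$ and $b\notin\bigcup\mathcal P'$.

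Let $P\in\mathcal P'$ be the path through $v$, and fix a vertex $a\in A$ on $P$: for an $A$--$B$-path this is its $A$-end, and for a $B$--$A$--$B$-path any $A$-vertex of it. The vertex $v$ splits $P$ into a part $P_1$ containing $a$, with $v$ its last vertex, and a tail $T$ from $v$ to an endvertex $b_0\in B$ with $a\notin T-v$. If $v=a$, either side may serve as $T$. We replace $P$ by $P_1\cup S$, or, in the $B$--$A$--$B$ case, by (the part of $P$ before $v$, including $a$) $\cup S$. This is a path disjoint from the other paths. It ends in $b\in B$, is internally disjoint from $B$ (the interior of $S$ consists of subdivision vertices), and still contains $a$ and has the same other endvertex. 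So the new family has the required type.

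The main step is to show that $\phi$ strictly decreases. The new path uses only wall edges on $S$, so it suffices to show that $T$ contains an edge outside $E(W)$. Suppose instead that $T\subseteq W$. Since $b_0\notin W'$ (it is an endvertex), $T$ starts inside the subdivided edge $S_0$ and, walking only along wall edges, must leave $S_0$. A walk along $W$ can leave $S_0$ only through one of its ends, which are branch vertices in $B$. Such a vertex is not $b_0$, because it lies in $W'$, so it would be an interior vertex of $T$ in $B$. This contradicts that $P$ is internally disjoint from $B$. Hence $T$ uses a non-wall edge, $\phi$ drops, and this contradicts minimality.

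I expect the delicate points to be this last argument together with the degenerate cases: $v=a$, $v$ being an endvertex of a trivial path, and vertices of $A$ lying in $W$. These need checking but should not change the argument.
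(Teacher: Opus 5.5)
Your proof follows the paper's proof. You use the same potential (path edges outside $E(W)$) and the same rerouting from the path vertex nearest to a branch vertex $b$ along a subdivided edge inside $W'$. You also use the same reason the discarded tail needs a non-wall edge: staying in $W$, it could only leave that edge through a vertex of $B\cap W'$, which would be an interior vertex of the path lying in $B$. Two steps are false as written, though both are easily fixed.

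\emph{The choice of $S_0$.} $S_0$ must be a subdivided edge of $W$, that is, a $B$-path of $W$ lying in $W'$, not a subdivided edge of $W'$. The boundary rows and columns of $W'$ contain vertices of $B$ that have degree~$2$ in $W'$. For example, on the top row of $W'$ these are the vertices whose third neighbour lies in the row of $W$ just above $W'$. So a subdivided edge of $W'$ can have vertices of $B$ in its interior.
\begin{itemize}
\item Then $S$ may run through such a vertex, and your new path is not internally disjoint from $B$.
\item Likewise, ``a walk along $W$ can leave $S_0$ only through one of its ends'' is false for such $S_0$. The conclusion survives, since the exit vertex is still in $B\cap W'$.
\end{itemize}
A vertex of $W'$ of degree $2$ in $W$ has both its $W$-edges in $W'$. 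Hence the whole $B$-path of $W$ through it lies in $W'$, and you may take $S_0$ to be that $B$-path; this is the paper's $B$-path $Q$. This choice also covers subwalls of size~$1$, which have no branch vertices of their own.

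\emph{The $A$--$B$ case.} Here $S-v$ may contain vertices of $A$, or $b\in A$. Then $P_1\cup S$ is not internally disjoint from $A$ and is not an $A$--$B$-path. This is your flagged case of vertices of $A$ lying in $W$, and it does require a change. Replace $P_1\cup S$ by its final segment starting at its last vertex in $A$. That segment is an $A$--$B$-path, is disjoint from the other paths, and has no more non-wall edges than $P_1\cup S$, so $\phi$ still strictly decreases. This is the paper's ``is, or contains, an $A$--$B$-path.''

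With these two repairs your argument is complete.
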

\begin{proof}
Depending on the nature of $\mathcal P$, choose $\mathcal P'$
among all sets of $|\mathcal P|$ disjoint $A$--$B$-paths, or among all sets of $|\mathcal P|$ disjoint
$B$--$A$--$B$-paths
such that
\[
\sum_{P\in\mathcal P'}|E(P)\sm E(W)|
\]
is minimal.

Let $W'$ be a subwall of $W$ that is disjoint from the set of endvertices of $\mathcal P'$
but suppose that some path $P$ in $\mathcal P'$ intersects $W'$. 
Then there is a $B$-path $Q$ within $W'$ such that $P$ meets the interior of $Q$; let $b$ be one of the endvertices of~$Q$. 
Starting at $b$, let $x$ be the first vertex along $Q$ that is met by
some path in $\mathcal P'$, and let this path be $P^*$.
Denote the endvertices of $P^*$ by $p$ and $q$.
Note that the paths $P^*_1=pP^*xQb$ and $P^*_2=bQxP^*q$ are both disjoint from $\mathcal P'\sm\{P^*\}$
and that both $P^*_1$ and $P^*_2$ have fewer edges outside $W$ than $P^*$ --- the latter is due to the
fact that $p,q$ cannot lie in~$Q$ because $W'$ is disjoint from the endvertices of every path in $\mathcal P'$
and because $P^*$ cannot meet the endvertices of $Q$.

If $\mathcal P$ is a set of $A$--$B$-paths,
then one of the paths $P^*_1$ and $P^*_2$ is, or contains, an $A$--$B$-path; and 
if $\mathcal P$ is a set of $B$--$A$--$B$-paths,
then one of $P^*_1$ and $P^*_2$ is a $B$--$A$--$B$-path. In each case we obtain a contradiction to 
the minimality of $\mathcal P'$ by replacing $P^*$ with one of the paths $P^*_1$ or $P^*_2$.
\end{proof}

Lemma~\ref{subwalllem} reiterates the idea of Lemma~\ref{reroutelem};
if a wall is large enough, then paths can be rerouted so that they avoid a large subwall altogether.

\begin{lemma}\label{subwalllem}
Let $n,t',c,f$ be positive integers, and let $t\geq 100n^2f^2t'+2c$.
Let~$\cA$ be a set of $n$ vertex sets, let $W$ be a wall of size at least~$t$, and
let $B$ be the set of branch vertices of $W$.
Then there is a subwall $W'$ of $W$ of size at least~$t'$ that is $c$-contained in $W$
such that
\begin{enumerate}[label=\rm(\roman*)]
\item if, for no $A\in\mathcal A$, there is a vertex set of at most $f$ vertices that meets every
$A$--$B$-path, then, for each $A\in\mathcal A$, there exists a set $\mathcal P_A$ of $f$ disjoint $A$--$B$-paths
such that $W'$ is disjoint from every path in $\bigcup_{A\in\mathcal A}\mathcal P_A$; and
\item if, for no $A\in\mathcal A$, there is a vertex set of at most $2f$ vertices that meets every
$B$--$A$--$B$-path, then, for each $A\in\mathcal A$, there exists a set $\mathcal P_A$ of $f$ disjoint $B$--$A$--$B$-paths
such that $W'$ is disjoint from every path in $\bigcup_{A\in\mathcal A}\mathcal P_A$.
\end{enumerate}
Moreover, $W'$ can be chosen such that no row or column of $W$ that contains an endvertex
of a path in $\bigcup_{A\in\mathcal A}\mathcal P_A$ intersects $W'$.
\end{lemma}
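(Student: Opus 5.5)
Apply Menger's theorem (for~(i)) or Theorem~\ref{STSpath} (for~(ii)) separately to each $A\in\mathcal A$. Then apply Lemma~\ref{reroutelem} to each resulting path system, and choose $W'$ by pigeonhole in a region of $W$ that contains no endvertices.

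First, fix $A\in\mathcal A$. In case~(i), no set of at most $f$ vertices meets every $A$--$B$-path, so Menger's theorem gives $f+1\ge f$ disjoint $A$--$B$-paths. In case~(ii), no set of at most $2f$ vertices meets every $B$--$A$--$B$-path. Since $2f > 2f-2$, Theorem~\ref{STSpath} with $k=f$ gives $f$ disjoint $B$--$A$--$B$-paths. Now apply Lemma~\ref{reroutelem} to this system to obtain $\mathcal P_A$, again $f$ disjoint paths of the same type, with the property that any subwall of $W$ containing no endvertex of $\mathcal P_A$ is disjoint from all of $\mathcal P_A$. Lemma~\ref{reroutelem} is applied to each $A$ separately. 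This is harmless: the conclusion only demands that each $\mathcal P_A$ consist of disjoint paths, and it never requires paths belonging to different sets to be disjoint.

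Next, the endvertices of $\bigcup_A\mathcal P_A$ number at most $2nf$. Each endvertex lies in at most two rows and at most two columns of $W$. A subdivision vertex on a vertical edge lies in two rows and one column; a branch vertex lies in one row and one column. So at most $4nf$ rows and $4nf$ columns are \emph{blocked}, meaning they contain an endvertex.

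To pick $W'$, discard the first and last $c$ rows and columns of $W$. The blocked rows then cut the remaining roughly $t-2c$ rows into at most $4nf+1$ intervals of consecutive unblocked rows, and similarly for the columns. By pigeonhole, some interval of rows and some interval of columns each have length at least about $(t-2c)/(4nf+1)-1$. The bound $t\ge 100n^2f^2t'+2c$ is far more than enough to make both lengths exceed $t'+2$, with room to spare for parity adjustments. Those consecutive rows and columns determine a subwall $W'$ of size at least $t'$ that is $c$-contained. No row or column of $W'$ is blocked, which is the ``moreover'' clause. Consequently $W'$ contains no endvertex of any path in $\bigcup_A\mathcal P_A$, and by property~(i) of Lemma~\ref{reroutelem}, applied to each $\mathcal P_A$, every path is disjoint from $W'$.

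The main obstacle is the bookkeeping in the last step. One must check that a subwall built from consecutive unblocked rows and columns really avoids every endvertex. This includes subdivision vertices on the vertical edges of bricks lying between the chosen rows: such a vertex lies in two consecutive rows, so it is counted as blocking those rows. One must also handle the parity constraints, which require choosing the first row and column appropriately so that the pieces form an elementary-wall subdivision. Both are routine, and the generous bound on $t$ absorbs the constant losses.
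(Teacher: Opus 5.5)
Your proposal is correct and follows essentially the paper's proof: apply Menger's theorem or Theorem~\ref{STSpath} to each $A$ separately, reroute each $\mathcal P_A$ via Lemma~\ref{reroutelem}, and choose $W'$ away from every row and column containing an endvertex. The only difference is in the pigeonhole step: the paper cuts $W$ into $\ell^2$ disjoint $c$-contained subwalls with $\ell=4nf+1$ and discards the at most $2|U|\ell<\ell^2$ of them that meet a blocked row or column, whereas you take a longest run of unblocked rows and columns. (One minor slip: a subdivision vertex on a vertical edge lies in no row and in exactly one column, so each endvertex blocks at most one row and one column; your overcount is harmless given the slack in $t$.)
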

\begin{proof}
For (i), we use Menger's theorem 
to obtain, for every $A\in\mathcal A$, a set $\mathcal P_A$ of $f$ disjoint $A$--$B$-paths.
For~(ii), we use Theorem~\ref{STSpath} to ensure that, for every $A\in\mathcal A$,
there is a set $\mathcal P_A$ of $f$ disjoint $B$--$A$--$B$-paths. 
In both cases, we may assume that, for every $A\in\mathcal A$, 
the set $\mathcal P_A$ is chosen as in Lemma~\ref{reroutelem}.

Let $U$ be the set of endvertices of the paths in $\bigcup_{A\in\mathcal A}\mathcal P_A$,
and observe that 
\[
|U|\leq 2\Big|\bigcup_{A\in\mathcal A}\mathcal P_A\Big|\leq 2nf.
\]
Put $\ell=4nf+1$.
As $t\geq 100n^2f^2t'+2c> 2\ell^2t'+2c$,
it follows that $W$ contains a set $\mathcal W$ of 
 $\ell^2$ disjoint subwalls of size $t'$ that are each $c$-contained in $W$.
Indeed, the set $\mathcal W$ can easily be obtained by splitting the rows and columns of $W$ 
(except for the first and last $c$ rows and columns that we exclude) in a grid-like fashion. 
Furthermore, this choice ensures that
each row and column of $W$ meets at most $\ell$ of the subwalls in~$\mathcal W$.
Each vertex in $U$ lies in at most one row and one column. Thus, if we exclude from $\mathcal W$
the subwalls that meet some row or column that contains a vertex from $U$, 
we exclude at most $2|U|\ell\leq 4nf \ell<\ell^2$ of the walls in $\mathcal W$. As $|\mathcal W|=\ell^2$,
we may choose one as $W'$ that is disjoint from any row or column 
that intersects $U$. Then, by Lemma~\ref{reroutelem},
$W'$ is also disjoint from every path in $\bigcup_{A\in\mathcal A}\mathcal P_A$.
\end{proof}

Let us call the induced subgraph of two consecutive columns, together with the two vertices of degree $2$ that have their neighbours in these columns, a \emph{brick column} of the elementary wall,
and let us call the largest $2$-connected subgraph in the induced subgraph of two consecutive rows a \emph{brick row}; see Figure~\ref{figbrickcol}.
A brick column or brick row in a wall is just the subdivision of a brick column or brick row in the underlying elementary wall.
\begin{figure}[htb]
\centering
\begin{tikzpicture}

\tikzstyle{smallvx}=[thick,circle,inner sep=0.cm, minimum size=2mm, fill=white, draw=black]
\tikzstyle{hedge}=[very thick,dunkelgrau]

\def\wallheight{6}
\def\brickheight{0.3}

\pgfmathtruncatemacro{\lastrow}{\wallheight}
\pgfmathtruncatemacro{\penultimaterow}{\wallheight-1}
\pgfmathtruncatemacro{\lastrowshift}{mod(\wallheight,2)}
\pgfmathtruncatemacro{\lastx}{2*\wallheight+1}

\draw[hedge] (\brickheight,0) -- (2*\wallheight*\brickheight+\brickheight,0);
\foreach \i in {1,...,\penultimaterow}{
  \draw[hedge] (0,\i*\brickheight) -- (2*\wallheight*\brickheight+\brickheight,\i*\brickheight);
}
\draw[hedge] (\lastrowshift*\brickheight,\lastrow*\brickheight) to ++(2*\wallheight*\brickheight,0);

\foreach \j in {0,2,...,\penultimaterow}{
  \foreach \i in {0,...,\wallheight}{
    \draw[hedge] (2*\i*\brickheight+\brickheight,\j*\brickheight) to ++(0,\brickheight);
  }
}
\foreach \j in {1,3,...,\penultimaterow}{
  \foreach \i in {0,...,\wallheight}{
    \draw[hedge] (2*\i*\brickheight,\j*\brickheight) to ++(0,\brickheight);
  }
}

\draw[ultra thick, black] (\brickheight,3*\brickheight) rectangle  (\lastx*\brickheight,2*\brickheight); 
\foreach \j in {1,3,...,\lastx}{
  \draw[ultra thick,black] (\j*\brickheight,2*\brickheight) -- ++(0,\brickheight);
}

\begin{scope}[shift={(-5,0)}]

\draw[hedge] (\brickheight,0) -- (2*\wallheight*\brickheight+\brickheight,0);
\foreach \i in {1,...,\penultimaterow}{
  \draw[hedge] (0,\i*\brickheight) -- (2*\wallheight*\brickheight+\brickheight,\i*\brickheight);
}
\draw[hedge] (\lastrowshift*\brickheight,\lastrow*\brickheight) to ++(2*\wallheight*\brickheight,0);

\foreach \j in {0,2,...,\penultimaterow}{
  \foreach \i in {0,...,\wallheight}{
    \draw[hedge] (2*\i*\brickheight+\brickheight,\j*\brickheight) to ++(0,\brickheight);
  }
}
\foreach \j in {1,3,...,\penultimaterow}{
  \foreach \i in {0,...,\wallheight}{
    \draw[hedge] (2*\i*\brickheight,\j*\brickheight) to ++(0,\brickheight);
  }
}

\foreach \i in {0,...,\penultimaterow}{
  \pgfmathsetmacro{\flip}{mod(\i,2)}
  \draw[ultra thick,black] (-\flip*\brickheight+5*\brickheight,\i*\brickheight) rectangle ++(2*\brickheight,\brickheight);
}

\end{scope}

\end{tikzpicture}
\caption{A brick column and a brick row in a wall}\label{figbrickcol}
\end{figure}

Lemma~\ref{brickslem} provides a particular $2$-connected subgraph in a wall, which we need in the proof of Lemma~\ref{connlemX}.

\begin{lemma}\label{brickslem}
Let $r$ be a positive integer, and let $c\geq 10r$. 
Suppose $W$ is a wall, $X$ is a vertex set with $|X|\leq r$, and $W'$ is a $c$-contained subwall of $W$ of size at least~$c$.
Then there exists a $2$-connected subgraph $C$ of $W$ such that 
\begin{enumerate}[label=\rm(\roman*)]
\item $C$ is disjoint from $X$;
\item $C$ meets $W'$ only in its top row and contains at least two nails; and
\item $C$ meets every row and every column of $W$.
\end{enumerate}
\end{lemma}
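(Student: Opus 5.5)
We build $C$ by pigeonhole from rows and columns of $W$ that avoid $X$, together with a short piece of the top row of $W'$. Since $|X|\le r$, at most $r$ rows and at most $r$ columns of $W$ meet $X$.

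First we choose the frame. The subwall $W'$ is $c$-contained with $c\ge 10r$, so there are at least $c\ge 10r$ rows of $W$ strictly above the top row of $W'$, and at least $10r$ rows below $W'$. Similarly, at least $10r$ columns lie to the left of $W'$ and at least $10r$ to the right. From these we pick four lines that avoid $X$:
\begin{itemize}
\item a row $R_1$ above $W'$, disjoint from $X$, chosen among the rows in the first $c$ rows of $W$;
\item a row $R_2$ below $W'$, disjoint from $X$;
\item a column $K_1$ left of $W'$, disjoint from $X$;
\item a column $K_2$ right of $W'$, disjoint from $X$.
\end{itemize}
These exist because at most $r$ of the $\ge 10r$ candidates in each region meet $X$. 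The rows $R_1,R_2$ do not meet $W'$, since every vertex of $W'$ lies in the rows and columns spanning $W'$; the same holds for $K_1,K_2$.

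Next we form the cycle. The union $R_1\cup R_2\cup K_1\cup K_2$ contains a cycle: take the segments of $R_1,R_2$ between their intersections with $K_1,K_2$, and the segments of $K_1,K_2$ between $R_1$ and $R_2$. This cycle is disjoint from $X$ and from $W'$. To satisfy (iii), we use the full paths $R_1,R_2,K_1,K_2$ instead of only these segments. Every column crosses $R_1$, and every row crosses $K_1$, so (iii) holds. The union of the cycle with the hanging ends of the four paths is not $2$-connected, however, because the ends are pendant paths. We fix this by taking $R_1,R_2$ as the first and last rows avoiding $X$ and $K_1,K_2$ as the extreme columns avoiding $X$, and then adding additional rows and columns avoiding $X$. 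A cleaner fix is to define $C$ as the union of all rows and columns of $W$ that avoid $X$ and also avoid $W'$, restricted to one ``outer'' block. Any two such rows are joined through any such column, and any two such columns through any such row. Each row–column pair intersects, so the union of at least two such rows and two such columns, each meeting both, is $2$-connected. The remaining worry is that rows meeting $X$ are not covered; (iii) only requires $C$ to \emph{meet} every row, and this is achieved via the columns $K_1,K_2$, which cross every row.

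We then attach $W'$'s top row. We take two adjacent nails of $W'$ and the segment $S$ of the top row of $W'$ between them. Using columns of $W$ that pass through these nails and avoid $X$, we connect both ends of $S$ upward to $R_1$ by disjoint vertical paths lying outside $W'$. Since nails are branch vertices of $W$ where a column of $W$ leaves $W'$ upward, this is possible, and we have ample choice of nail pairs ($W'$ has size $\ge c\ge 10r$) to avoid $X$ on $S$ and on these two column segments. Adding $S$ and the two connectors forms an ear on a $2$-connected graph, which keeps the graph $2$-connected. The result meets $W'$ only in $S$, which lies in its top row, and contains two nails, giving (ii).

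The main obstacle is the bookkeeping that makes each piece avoid $X$ while keeping the column connectors above $W'$ outside $W'$. I would handle it by counting: $X$ blocks at most $r$ of the roughly $c/2 \ge 5r$ candidate nail pairs and column segments, so some choice survives.
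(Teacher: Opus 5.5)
There is a genuine gap in your frame step. Nothing you construct is at the same time $2$-connected, disjoint from $X$, and meeting every row and column of $W$. Rows and columns are paths, and their ends cannot be closed off using only other $X$-free rows and columns.

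Concretely, suppose $X$ contains a vertex of the top row of $W$. Every column of $W$ has its top endpoint on the top row. That top row is not among your chosen rows, and rows and columns are pairwise disjoint within their own class. So in $R_1\cup R_2\cup K_1\cup K_2$, or in your ``union of all rows and columns avoiding $X$ and $W'$'', the top endpoint of each chosen column has degree~$1$. The union is therefore not $2$-connected, and taking $R_1$ to be the first $X$-free row does not change this.

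If you instead cut the columns off at the topmost chosen row, as in your cycle or ``outer block'', then no row above it is met, and (iii) fails. Thus the claim that two rows and two columns ``each meeting both'' give a $2$-connected graph is false for full lines. Your remark that (iii) is achieved by $K_1,K_2$ relies on exactly the pendant parts of these columns.

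The missing idea is to use brick lines instead of lines. A brick row is $2$-connected and still meets every column of $W$; a brick column is $2$-connected and meets every row. Among the disjoint brick rows inside the first $10r$ rows of $W$, at most $r$ meet $X$. So some brick row $R$ avoids $X$, and hence also avoids $W'$. Likewise there is a brick column $D$ avoiding $X$ and $W'$. A brick row and a brick column share at least two vertices, so $R\cup D$ is $2$-connected and satisfies (i) and (iii).

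Your ear through two adjacent nails of $W'$, attached to $R$, together with your counting argument for avoiding $X$, then completes the proof. This is essentially the paper's argument. The paper takes the upper part (from the top of $W$ down to the top row of $W'$) of an $X$-free brick column of $W$ that restricts to a brick column of $W'$, and this piece contains your ear.
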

\begin{proof}
As the subwall $W'$ of $W$ has size at least $10 r\geq 10|X|$,
there is a brick column $C'$ of $W$ that, restricted to $W'$,
is also a brick column of $W'$ and that is disjoint from $X$.
In the graph obtained from $C'$
by removing all of $W'$ except for the first row, let $C''$ be the component that meets 
the top row of $W$. 
Observe that~$C''$ is $2$-connected and contains two nails of $W'$.

\begin{figure}[htb]
\centering
\begin{tikzpicture}[scale=0.8]

\tikzstyle{smallvx}=[thick,circle,inner sep=0.cm, minimum size=1mm, fill=white, draw=black]
\tikzstyle{hedge}=[very thick]

\tikzstyle{hvertex}=[thick,circle,inner sep=0.cm, minimum size=1.8mm, fill=white, draw=black]
\tikzstyle{wed}=[hedge,color=dunkelgrau]
\tikzstyle{jumps}=[ultra thick, white, double distance=1pt, double=black,bend left=90]
\tikzstyle{brickline}=[thick,hellgrau,fill=hellgrau]
\tikzstyle{lineb}=[thick,dunkelgrau]

\def\bh{0.25}
\def\downhook{-- ++(0,-\bh) -- ++(\bh,0) -- ++(0,-\bh) -- ++(-\bh,0)}
\def\lasthook{-- ++(0,-\bh) -- ++(\bh,0) -- ++(0,-\bh)}
\def\wallh{8}
\def\rightx{8}

\begin{scope}[on background layer]

\foreach \j in {2,...,\wallh}{
  \draw[wed] (0,-\j*2*\bh+4*\bh) \downhook;
  \draw[wed] (\rightx,-\j*2*\bh+4*\bh) \downhook;
}
\draw[wed] (0,-\wallh*2*\bh+2*\bh) \lasthook;
\draw[wed] (\rightx,-\wallh*2*\bh+2*\bh) \lasthook;

\draw[wed] (0,0) -- (\rightx,0);
\draw[wed] (\bh,-\wallh*2*\bh) -- (\rightx+\bh,-\wallh*2*\bh);
\end{scope}

\draw[brickline] (0,-3*\bh) rectangle ++(\wallh*\bh*4,\bh);
\draw[brickline] (3*\bh,0) rectangle ++(2*\bh,-\wallh*\bh*2);

\draw[brickline] (22*\bh,0) rectangle (22*\bh+0.2,-1.36);

\draw[lineb] (0,-3*\bh) rectangle ++(\wallh*\bh*4,\bh);
\draw[lineb] (3*\bh,0) rectangle ++(2*\bh,-\wallh*\bh*2);

\draw[lineb] (22*\bh,0) rectangle (22*\bh+0.2,-1.365);

\node at (4*\bh,-10*\bh) {$D$};
\draw[fill=white,white,opacity=0.5] (9*\bh,-2*\bh) circle (5pt);
\node  at (9*\bh,-2*\bh) {$R$};
\draw[fill=white,white,opacity=0.5] (23*\bh,-0.3) circle (5pt);
\node at (23*\bh,-0.3) {$C''$};

\node at (\wallh*4*\bh-2*\bh,-\wallh*2*\bh+\bh) {$W$};

\begin{scope}[shift={(2.8,-2.3)},rotate=20]
\draw[dunkelgrau,fill=hellgrau] (0,0) ellipse [x radius=0.75, y radius=0.45];
\end{scope}
\node at (2.8,-2.3) {$X$};

\clip[] (4.5,-2.5) rectangle (6.5,-1);

\begin{scope}[shift={(4,-2.58)}]
\tikzstyle{wed}=[thick,color=dunkelgrau]
\def\wallheight{12}
\def\brickheight{0.1}

\pgfmathtruncatemacro{\lastrow}{\wallheight}
\pgfmathtruncatemacro{\penultimaterow}{\wallheight-1}
\pgfmathtruncatemacro{\lastrowshift}{mod(\wallheight,2)}
\pgfmathtruncatemacro{\lastx}{2*\wallheight+1}

\draw[wed] (\brickheight,0) -- (2*\wallheight*\brickheight+\brickheight,0);
\foreach \i in {1,...,\penultimaterow}{
  \draw[wed] (0,\i*\brickheight) -- (2*\wallheight*\brickheight+\brickheight,\i*\brickheight);
}
\draw[wed] (\lastrowshift*\brickheight,\lastrow*\brickheight) to ++(2*\wallheight*\brickheight,0);

\foreach \j in {0,2,...,\penultimaterow}{
  \foreach \i in {0,...,\wallheight}{
    \draw[wed] (2*\i*\brickheight+\brickheight,\j*\brickheight) to ++(0,\brickheight);
  }
}
\foreach \j in {1,3,...,\penultimaterow}{
  \foreach \i in {0,...,\wallheight}{
    \draw[wed] (2*\i*\brickheight,\j*\brickheight) to ++(0,\brickheight);
  }
}

\draw[fill=white,white,opacity=0.7] (1,0.5) ellipse [x radius=12pt, y radius=8pt];
\node at (1,0.5) {$W'$};

\end{scope}

\end{tikzpicture}
\caption{The subgraph $C=C''\cup R\cup D$ of Lemma~\ref{brickslem}.}\label{framefig}
\end{figure}

Since $W'$ is $10r$-contained in $W$,
there is a brick row $R$ of $W$ contained in the first $10r$ rows of $W$ that is 
disjoint from $X$, and hence from $W'$, too. 
Note that the union of $R$ and $C''$ is $2$-connected. 
We also pick a brick column~$D$ of~$W$ that is disjoint from~$W'$ and from~$X$. 
Again this is possible because $|X|\leq r$ and because we can choose it to be 
contained in the first $c$ columns.
We set $C=C''\cup R\cup D$, which completes the proof.
\end{proof}

\begin{lemma}\label{brickcollem}
Let $U$ be a subset of the set of branch vertices of a wall $W$, and assume that 
$|U|\geq 10m^2$ for some integer $m\geq 9$. Then $W$ either contains at least
$m$ disjoint brick rows that each meet $U$ or it contains at least $m$ disjoint brick columns
that each meet $U$.
\end{lemma}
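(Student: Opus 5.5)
The plan is a pigeonhole argument on the grid of brick rows and brick columns that contain each vertex of $U$. Fix a partition of the rows of $W$ into consecutive pairs, which gives disjoint brick rows $R_1,R_2,\dots$; do the same for the columns, giving disjoint brick columns $D_1,D_2,\dots$. Every branch vertex of $W$ lies in a row and in a column. Hence it lies in at least one of the brick rows $R_i$; if its row is not covered by the chosen pairing, we use the alternative pairing, which is handled below. Similarly it lies in at least one $D_j$.

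The parity issue is handled first. There are two ways to pair consecutive rows into disjoint brick rows, offset by one, and together they cover every row. So every branch vertex lies in a brick row of at least one of the two pairings. By pigeonhole, one pairing covers at least $|U|/2\geq 5m^2$ vertices of $U$; call this set $U_1$. Applying the same argument to columns inside $U_1$ gives $U_2\subseteq U_1$ with $|U_2|\geq |U|/4\geq \tfrac{5}{2}m^2$, such that every vertex of $U_2$ lies in some brick row of a fixed family $\cR$ of pairwise disjoint brick rows, and in some brick column of a fixed family of pairwise disjoint brick columns.

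Now suppose that fewer than $m$ brick rows of $\cR$ meet $U_2$, and fewer than $m$ brick columns of the column family meet $U_2$. Each vertex of $U_2$ is determined by the pair (its row, its column). A brick row of $\cR$ consists of two rows, and a brick column consists of two columns. A row and a column of $W$ intersect in a path that contains at most two branch vertices; in fact the intersection contains at most a constant number $c_0$ of branch vertices, and I expect $c_0\le 2$. Consequently a brick row and a brick column share at most $4c_0$ branch vertices. This gives
\[
|U_2|\le (m-1)^2\cdot 4c_0.
\]
This contradicts $|U_2|\geq \tfrac52 m^2$ only if $4c_0<5/2$, which fails.

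So the crude count is too weak, and I would sharpen it; this is the main obstacle, namely getting the constant to match $10m^2$. There are two refinements.
\begin{enumerate}[label=\rm(\alph*)]
\item Avoid losing factors in the parity step. Use all brick rows rather than a disjoint family: among brick rows meeting $U$, greedily choosing every other one loses only a factor of $2$, and similarly for columns.
\item Count exactly. The intersection of a brick row (two rows) with a brick column (two columns) contains exactly the branch vertices of a single brick, so at most $4$ branch vertices, plus possibly the shared vertices of adjacent bricks.
\end{enumerate}
With these refinements the argument runs as follows. If at most $2m-1$ brick rows (among all brick rows, which overlap only consecutively) meet $U$, then $U$ lies in at most $2m$ rows. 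Likewise $U$ lies in at most $2m$ columns. A single row and a single column share at most $2$ branch vertices, so $|U|\le 8m^2<10m^2$, a contradiction.

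So, say, at least $2m$ brick rows meet $U$. Order them top to bottom and take every second one. This yields at least $m$ pairwise disjoint brick rows, each meeting $U$, which is what the lemma asks for. The hypothesis $m\ge 9$ provides slack for boundary effects, such as the first and last rows and the degree-2 corner vertices.
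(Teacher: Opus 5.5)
Your final argument (the last two paragraphs) is correct, and it takes a different route from the paper's proof. You can delete the parity-pairing attempt with $U_1,U_2$ and refinement (b) entirely. In fact (b) is miscounted: a brick row and a brick column can share up to $8$ branch vertices, the number the paper uses. You never rely on it, though.

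Your proof uses the product structure of the wall:
\begin{enumerate}[label=\rm(\arabic*)]
\item each branch vertex lies in exactly one row and one column, and a row and a column share at most $2$ branch vertices;
\item all branch vertices of row $r$ lie in the brick row on rows $r,r+1$, because the row ends cut off from a brick row contain no branch vertices. Hence if fewer than $2m$ brick rows meet $U$, then $U$ lies in at most $2m$ rows, and likewise for columns;
\item if both hold, then $|U|\le 2m\cdot 2m\cdot 2=8m^2$, a contradiction;
\item from $2m$ overlapping brick lines meeting $U$ you extract $m$ disjoint ones by taking every second one. This works because brick lines whose indices differ by at least $2$ are vertex-disjoint.
\end{enumerate}
When you write it up, state the facts in (1), (2) and (4) explicitly.

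The paper instead fixes one family of pairwise disjoint brick rows and splits on whether some single brick row contains at least $8m+2$ vertices of $U$. If none does, it averages over that disjoint family plus one leftover ordinary row to get $m$ brick rows meeting $U$; this is exactly where $m\ge 9$ is needed. If one does, it spreads the $U$-vertices of that heavy brick row over a disjoint family of brick columns, each meeting it in at most $8$ branch vertices.

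Your version needs no case distinction and no lower bound on $m$, and it already works for $|U|\ge 8m^2+1$. So your closing remark that $m\ge 9$ absorbs boundary effects is unnecessary: the boundary is handled by the $+1$ in passing from $2m-1$ brick rows to $2m$ rows. The paper's version avoids the every-second selection by working with a fixed disjoint family throughout, at the cost of the case analysis and the restriction on $m$.
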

\begin{proof}
Assume first that no brick row of $W$ contains at least $8m+2$ vertices from $U$.
The branch vertices of the wall can be covered by disjoint brick rows plus, potentially, one (ordinary) row.
As that ordinary row also contains fewer than $8m+2$ vertices from $U$, 
it follows that at least the following number of the disjoint brick rows meet $U$:
\[
\frac{|U|-(8m+2)}{8m+2} \geq \frac{9m^2+m^2-9m}{9m} \geq m,
\]
where we have used that $m\geq 9$.

Second, assume that some brick row, $R$ say, meets at least $8m+2$ vertices from $U$.
The branch vertices of the wall $W$ can be covered by disjoint brick columns, except
for possibly the last (ordinary) column. As the last (ordinary) column meets $R$
in two branch vertices, it follows that there is a set $\mathcal C$ of disjoint
brick columns such that at least $8m+2-2=8m$ of the vertices in $U$ lie in 
the intersection of $R$ and some brick column in $\mathcal C$. Since any 
brick column meets $R$ in eight branch vertices of the wall, it follows 
that at least $m$ of the disjoint brick columns in $\mathcal C$ contain 
some vertex from $U$. 
\end{proof}

We proceed with the main result of this section.
We write \emph{$A$-linkage} for an $\{A\}$-linkage, 
and \emph{$A$--$W$-connector} for an $\{A\}$--$W$-connector.

\begin{lemma}\label{connlemX}
For all positive integers $n,s',t'$, 
there are integers $t=t(n,s',t')$ and $f=f(s')$ such that the following holds.
If $\mathcal A$ is a set of $n$ vertex sets, and~$W$ is a wall of size at least~$t$
with $B$ as its set of branch vertices, then
\begin{enumerate}[label=\rm(\roman*)]
\item if for no $A\in\mathcal A$, there is a vertex set of size at most~$f$ 
that meets every $A$--$B$-path, 
then there is a subwall $W'$ of size at least~$t'$
and for every $A\in\mathcal A$ an $A$--$W'$-connector of size~$s'$; and
\item if for no $A\in\mathcal A$, there is a vertex set of size at most~$f$ 
that meets every $B$--$A$--$B$-path, 
then there is a subwall $W'$ of size at least~$t'$
and for every $A\in\mathcal A$ an $A$-linkage of $W'$ of size~$s'$.
\end{enumerate}
\end{lemma}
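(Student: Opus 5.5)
Throughout, $f$ depends only on $s'$ (for instance $f=100s'^2$ or a similar polynomial), and the per-$A$ connectors and linkages are not required to be disjoint from one another. So the plan is to treat each $A\in\mathcal A$ separately against a single subwall. We first use Lemma~\ref{subwalllem} to find a subwall $W'$ of size at least some $t''\ge \max(t',c)$, $c$-contained in $W$ with $c\ge 10f$. By Lemma~\ref{subwalllem}, for each $A$ there is a family $\mathcal P_A$ of $f$ disjoint $A$--$B$-paths (case (i)) or $B$--$A$--$B$-paths (case (ii)) avoiding $W'$. No row or column of $W$ that contains an endvertex of these paths meets $W'$. For case (ii) we apply the lemma with $f/2$ in place of $f$, which the hypothesis permits. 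The integer $t$ is then dictated by $t\ge 100n^2f^2t''+2c$. It remains to route, for each fixed $A$, $s'$ of these paths into distinct nails of a common subwall of $W'$ of size $t'$. We achieve this by making $W'$ wide, say $t''\ge 10 s' t'$, and taking $W''\subseteq W'$ consisting of the first $t'$ rows of $W'$ and suitable columns, so that its top row is the top row of $W'$.

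Fix $A$. Let $U$ be the set of endvertices of $\mathcal P_A$ lying in $B$; it has at least $f$ (or $2\cdot f/2$) vertices. Apply Lemma~\ref{brickcollem} with $m\approx 2s'+2$, so $|U|\ge 10m^2$ dictates $f$. This gives $m$ disjoint brick rows or brick columns of $W$, each meeting $U$, and by the ``moreover'' part of Lemma~\ref{subwalllem} all of them avoid $W'$. Suppose they are brick columns; the row case is symmetric. Each such column is a path-like strip running from the top to the bottom of $W$, hence it crosses the first $c$ rows above $W'$. We want to connect $s'$ of these strips, disjointly, into $s'$ distinct nails of $W'$. We do this by a Menger-type argument inside $W$ minus the interior of $W'$, together with the paths of $\mathcal P_A$. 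Suppose a set $X$ of fewer than $s'$ vertices separates $\bigcup\mathcal P_A$ from the nails of $W'$ in $(W-(W'-N))\cup\bigcup\mathcal P_A$, where $N$ is the nail set of $W'$. Then Lemma~\ref{brickslem} with $r=s'$ yields a $2$-connected $C$ avoiding $X$, containing nails of $W'$ and meeting every row and column. In particular $C$ meets every strip, and all but $|X|<s'$ of the $m$ disjoint strips together with their $\mathcal P_A$-paths avoid $X$, which is a contradiction. Hence Menger gives $s'$ disjoint paths from $\bigcup\mathcal P_A$ into distinct nails, internally disjoint from $W'$. Combining them with initial segments of the $\mathcal P_A$-paths gives $A$--$N$-paths, via the standard ``union of two path systems contains a linkage'' argument: take the final segments of the Menger paths after their last intersection with $\bigcup\mathcal P_A$.

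For linkages (case (ii)), each $B$--$A$--$B$-path has two endpoints in $U$. Cut it at a vertex of $A$ into two halves, then route both halves to nails. We apply the Menger argument to $2s'$ paths, using the full $2$-connectivity of $C$, or equivalently require separators of size $<2s'$ to fail. Then we retain $s'$ paths of $\mathcal P_A$ whose two ends both reach distinct nails. A pigeonhole argument suffices: if at most $s'-1$ paths had both ends routed, then the many paths with one end routed give a separator argument again. The cleaner route is to Menger-connect the set of all $2f$ endpoints-strips to nails with $4s'$ paths. Then we choose $s'$ paths of $\mathcal P_A$ both of whose ends are used, which holds because at most $2s'$ paths use only one end.

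The main obstacle is this disjointness bookkeeping: the Menger paths may intersect the $\mathcal P_A$-paths and the strips in complicated ways. As a result, the resulting walks must be shortcut into genuine $A$--$N$-paths or $N$--$A$--$N$-paths without losing the vertex of $A$. I would handle this by applying Menger in the auxiliary graph where each path of $\mathcal P_A$ is contracted to a single source (or, for linkages, each half contracted). Uncontracting then preserves disjointness and keeps passage through $A$.
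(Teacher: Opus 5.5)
\paragraph{Case (ii): Menger plus pigeonhole does not give an $A$-linkage.} You route $4s'$ disjoint paths from the endpoint strips to the nails and then claim that ``at most $2s'$ paths of $\mathcal P_A$ use only one end''. Nothing justifies this. Such a Menger family may well use one half each of $4s'$ different paths of $\mathcal P_A$, and Menger's theorem gives you no control over which family you get. Producing disjoint $N'$--$A$--$N'$-paths is a Gallai-type problem, not a Menger-type one.

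The missing ingredient is Theorem~\ref{STSpath}, applied in $G-(W'-N')$ to $A$ and the nail set $N'$ of $W'$. If there is no $A$-linkage of $W'$ of size $s'$, it gives a set $X$ of fewer than $2s'$ vertices meeting every such path. You then only need to exhibit one $N'$--$A$--$N'$-path avoiding $X$ and $W'-N'$:
\begin{itemize}
\item Pick $P\in\mathcal P_A$ disjoint from $X$ such that \emph{both} its endvertices lie on brick lines $L_1,L_2$ that avoid $X$ and $W'$.
\item This needs two successive applications of Lemma~\ref{brickcollem}: first to the first endvertices, then to the second endvertices of the surviving paths. That is why $f$ must be of order $s'^4$, not $s'^2$.
\item Then $C\cup L_1\cup L_2\cup P$ is $2$-connected, avoids $X$, and contains two nails and a vertex of $A$, so it contains such a path (stop at the first nail reached in each direction).
\end{itemize}
This is where the $2$-connectivity of $C$ from Lemma~\ref{brickslem} is genuinely needed.

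\paragraph{Case (i): the Menger set-up is wrong.} With $V(\bigcup\mathcal P_A)$ as source set, several Menger paths may start on the same path of $\mathcal P_A$. That path contains only one vertex of $A$, so no final-segment trick yields $s'$ disjoint $A$--$N$-paths.

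Contracting each path of $\mathcal P_A$ does not repair this. A separator in the contracted graph may contain contracted vertices, i.e.\ entire paths of $\mathcal P_A$. These are internally disjoint from $B$ but can run through subdivision vertices of $C$ and of the strips. So the separator is no longer a bounded set of vertices of $W$, and Lemma~\ref{brickslem} no longer gives a route avoiding it.

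The fix is to take $A$ itself as source and apply Menger between $A$ and $N$ in $G-(W'-N)$. A separator is then a genuine set of fewer than $s'$ vertices. Your argument contradicts it: take a path of $\mathcal P_A$ and a strip through its $B$-end, both avoiding $X$, and join the strip to a nail through $C$. The Menger paths are then directly the connector.

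\paragraph{Minor points.}
\begin{itemize}
\item The ``moreover'' part of Lemma~\ref{subwalllem} only guarantees that the row or column \emph{containing} an endvertex misses $W'$. A brick line through it may still meet $W'$, so a few such lines must be discarded.
\item The auxiliary subwall $W''$ is unnecessary, since $W'$ already serves all $A\in\mathcal A$ simultaneously.
\end{itemize}
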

While the lemma looks very similar to Theorems~\ref{connlem} and~\ref{linkagelem}, 
note that there is a significant difference: the lemma only yields an $A$-linkage/an $A$--$W'$-connector
for each $A\in\mathcal A$, but not an $\mathcal A$-linkage/$\cA$--$W'$-connector.
In particular, the paths in the $A_1$--$W'$-connector could intersect the paths in the $A_2$--$W'$-connector.
\begin{proof}[Proof of Lemma~\ref{connlemX}]
Given $n,s',t'$, we set 
\[
f=2\cdot 10^7s'^4,\quad c=20s'\quad \text{and} \quad t\geq 100n^2f^2t'+2c.
\]

Now let a graph $G$ with a set $\cA$ of $n$ vertex sets and a wall $W$ of size at least~$t$
with a set of branch vertices $B$ be given, and assume that the precondition
of~(i) or of~(ii) is satisfied. 
We will give the proof for~(ii) as it is slightly more complicated and only indicate 
where and how the arguments need to be adapted for~(i). 

As there is no set of $f$ vertices that meets every $B$--$A$--$B$-path for any $A\in\mathcal{A}$ (or every $A$--$B$-path in the case of (i)), we may start by 
applying Lemma~\ref{subwalllem} to obtain a $c$-contained subwall $W'$ of size at least $t'$
and, for each $A\in\mathcal A$, a set $\mathcal P_A$ of disjoint $B$--$A$--$B$-paths of size~$10^7s'^4$
such that $W'$ is 
disjoint from every path in $\mathcal P_A$ for every $A\in\mathcal A$
and is also disjoint from every row or column of $W$ that contains an endvertex of a path 
in $\bigcup_{A\in\mathcal A}\mathcal P_A$.
Let the set of nails of $W'$ be $N'$.

Suppose that there is some $A\in\mathcal A$ for which there is no $A$-linkage of $W'$ of size~$s'$.
By Theorem~\ref{STSpath}, there is thus a vertex set $X$ of size smaller than $2s'$
such that $G-(W'-N')$ does not contain any $N'$--$A$--$N'$-path.
(For~(i), we simply use Menger's theorem here.)

Next, we apply Lemma~\ref{brickslem} to $W$, $W'$ and $X$, with $r=2s'$. We obtain a $2$-connected
subgraph $C$ of $W$ that is disjoint from $X$, that meets $W'$ only in its top row, where it 
contains at least two nails, and that intersects every row and column of $W$.

As a convenience, let us subsume brick rows and brick columns under the common name \emph{brick lines}. 
For each path in $\mathcal P_A$,
pick one of the endvertices and call it the \emph{first endvertex}; 
the other endvertex then becomes the \emph{second endvertex}.
Let $U_1$ be the set of all first endvertices of the paths in $\mathcal P_A$, and note that $|U_1|=|\mathcal P_A| = 10^7s'^4$. 
We apply Lemma~\ref{brickcollem} to $U_1$
and $W$, and obtain a set of at least $10^3s'^2$ disjoint brick lines $\mathcal B_1$ that each meet $U_1$.
For each brick line $L$ in $\mathcal B_1$, choose a path in $\mathcal P_A$ such that $L$ contains the first endvertex of the path;
let the set of these paths be $\mathcal P^{(1)}$ and note that $|\mathcal P^{(1)}|= 10^3s'^2$.

Defining $U_2$ to be the set of second endvertices of the paths in $\mathcal P^{(1)}$, 
we apply Lemma~\ref{brickcollem} again,
and obtain a set $\mathcal B_2$ of $10s'$ disjoint brick lines that each meet~$U_2$. 
Let $\mathcal P^{(2)}$ be the set 
of those paths in $\mathcal P^{(1)}$ whose second endvertex is met by some brick line in $\mathcal B_2$. 
Of the (disjoint) brick lines in $\mathcal B_1$, fewer than $2s'$ meet $X$, as $|X|< 2s'$, and similarly 
fewer than $2s'$ of the brick lines in $\mathcal B_2$ meet $X$.
As $|\mathcal B_2|\geq 10s'$, there is thus a path $P$ in $\mathcal P^{(2)}$ (and thus in $\mathcal P_A$) such that 
there is a brick line $L_1$ that contains its first endvertex, a brick line $L_2$ that contains its second endvertex and 
such that neither $L_1$ nor $L_2$ meets $X$.
In fact, there are at least $6s' > 5$ such paths $P$.
For at least one of these five paths,
neither $L_1$ nor $L_2$ intersects
$W'$ (observe that the choice of $W'$ only guarantees that each row or column that contains an endvertex of $P$ is disjoint from $W'$ --- however, $L_1$ and $L_2$ are brick lines).

As $C$ meets every row and every column of $W$, it follows that $C\cup L_1\cup L_2\cup P$ 
is a $2$-connected subgraph of $W$ that is
disjoint from $X$ and meets $W'$ only in the top row of $W'$. Since $C$ contains two nails of $W'$,
we find in $C\cup L_1\cup L_2\cup P$ an $N'$--$A$--$N'$-path that is disjoint from $W'-N'$. This, however, 
contradicts the choice of $X$.
\end{proof}

\section{Disentangling the paths}

In the previous section, Lemma~\ref{connlemX} already provides an important step towards our goal.
However, for different $A\in \cA$,
the corresponding sets of paths may interfere.
In this section, we resolve this problem.

\begin{lemma}\label{ABsortlem}
Let $n,t$ be positive integers. Let $A_1,\ldots, A_{n}$ be vertex sets in a graph~$G$, and suppose
that, for every $i\in[n]$, there is a set $\mathcal Q_i$ of at least~$2^{n}t$ disjoint $A_i$--$B$-paths.
Then there are sets $\mathcal P_i$ of $A_i$--$B$-paths for every $i\in[n]$
such that 
\begin{enumerate}[label=\rm(\roman*)]
\item the paths in $\bigcup_{i=1}^n\mathcal P_i$ are pairwise disjoint;
\item $|\mathcal P_i|\geq t$ for every $i\in[n]$; and
\item every path $P\in\bigcup_{i=1}^n\mathcal P_i$ is contained in $\bigcup_{i=1}^n\bigcup_{Q\in\mathcal Q_i}Q$.
\end{enumerate}
\end{lemma}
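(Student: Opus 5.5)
The plan is induction on $n$. We work throughout in the graph $H=\bigcup_{i=1}^n\bigcup_{Q\in\mathcal Q_i}Q$, which takes care of~(iii) automatically. The engine is the augmentation property of linkable sets, which is essentially Menger's theorem. Call a set $S\subseteq V(H)$ \emph{linkable} if there are $|S|$ disjoint $S$--$B$-paths in $H$ (a vertex of $S\cap B$ counting as a trivial path). The linkable sets are the independent sets of a gammoid, by Perfect's theorem. So if $S$ and $T$ are linkable with $|T|>|S|$, then $S\cup\{x\}$ is linkable for some $x\in T\sm S$. If I prefer not to quote matroid theory, I would prove this special case directly from Menger's theorem in $H$ by the usual alternating-path argument.

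For $n=1$ we take $\mathcal P_1=\mathcal Q_1$. For the induction step, apply the statement for $n-1$ to $A_1,\dots,A_{n-1}$. This is allowed because $|\mathcal Q_i|\ge 2^nt\ge 2^{n-1}t$. It gives pairwise disjoint families $\mathcal P'_1,\dots,\mathcal P'_{n-1}$ in $H$, each of size exactly $t$ after discarding surplus paths. Let $S$ be the set of their start vertices, where the start vertex of a path in $\mathcal P'_i$ is its endvertex in $A_i$. Then $|S|=(n-1)t$ and $S$ is linkable. Let $T\subseteq A_n$ be the set of $A_n$-endvertices of $\mathcal Q_n$. This set is linkable with $|T|\ge 2^nt\ge nt=|S|+t$.

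Applying augmentation $t$ times yields $T'\subseteq T\sm S$ with $|T'|= t$ and $S\cup T'$ linkable. We obtain a family $\mathcal R$ of $nt$ disjoint $(S\cup T')$--$B$-paths in $H$, each starting at a distinct vertex of $S\cup T'$. Let $\mathcal P_i$ consist of the paths of $\mathcal R$ starting at the $t$ vertices of $S$ that came from $\mathcal P'_i$, for $i<n$, and let $\mathcal P_n$ consist of those starting in $T'$.

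Each path of $\mathcal P_i$ runs from a vertex of $A_i$ to $B$, but it need not be internally disjoint from $A_i$ or from $B$. For $B$ this is already handled: an $(S\cup T')$--$B$-path ends at its first vertex in $B$. For $A_i$, I shorten each path to its subpath from its last vertex in $A_i$. This keeps it an $A_i$--$B$-path, keeps it inside $H$, and preserves disjointness. A vertex lying in several $A_i$ causes no trouble, because each start vertex is assigned to exactly one family. This gives (i)--(iii).

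The main obstacle is the augmentation step: showing that we can keep all of $S$ while adding $t$ new starting vertices from $A_n$. The naive approach of following paths of $\mathcal Q_n$ until they hit $\bigcup\mathcal P'_i$ and then rerouting fails, because many $\mathcal Q_n$-paths may hit the same path first. I would therefore rely on the gammoid exchange property, or on a direct Menger argument: in $H$, add a super-source joined to $S\cup T$ and observe that a maximum family of disjoint $(S\cup T)$--$B$-paths can be chosen to use all of $S$. The counting $2^nt\ge nt$ is then far more than needed.
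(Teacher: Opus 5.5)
Your proof is correct, but it takes a different route from the paper.

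The paper gives no detailed proof. It cites the case $n=2$ from \cite{BU18} and gets the general case by induction, or alternatively by adapting the argument of Lemma~\ref{disjlink}. There, one family is rerouted inside the union so as to minimise its number of edges outside the other paths. Minimality forces every other path that meets it to end in one of its $B$-endvertices, so at most as many other paths are lost as there are such endvertices. Paying this loss once per induction step is where the factor $2^n$ comes from.

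You replace all of this with the exchange axiom of the gammoid of $H$ with respect to $B$, which does the rerouting in one stroke. This gives a linear bound ($|\mathcal Q_i|\ge nt$ suffices). It also makes the induction unnecessary: you could augment greedily from $S=\emptyset$, taking $t$ new start vertices from each $T_i$ in turn.

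The price is that you rely on Perfect's theorem, whose proof is itself an augmenting-path argument. Your fallback ``observe that a maximum family can be chosen to use all of $S$'' is exactly that nontrivial content, not an observation. It also needs the super-source formulation. With paths required to be internally disjoint from $S\cup T$, it is false, e.g.\ for a single path $s\,x\,b$ with $s\in S$, $x\in T$, $b\in B$.

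The paper's minimality argument, by contrast, is elementary and self-contained. It is the same template used for $B$--$A_i$--$B$-paths in Lemma~\ref{disjlink}. It also allows one path system to be kept as a subfamily of the original one (property (iv) there, needed in Lemma~\ref{combilem}), whereas your method reroutes every path.
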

For $n=2$ the lemma is proved in~\cite{BU18}. The version here can be obtained
from that result by straightforward induction.
Alternatively, for a direct proof, one may adapt the arguments of the proof of the next lemma, 
which is basically a reformulation of Lemma~\ref{ABsortlem} for $B$--$A_i$--$B$-paths. 
There is a small twist, though: we also specify an $n$-th path system $\mathcal Q_n$ 
that we do not allow to be rerouted. Rather, we claim that we may find a suitable subset of the paths in $\mathcal Q_n$.

\begin{lemma}\label{disjlink}
Let $n,t$ be positive integers. Let $A_1,\ldots, A_{n-1}$ be vertex sets in a graph $G$, and suppose
that, for every $i\in[n-1]$, there is a set $\mathcal Q_i$ of at least~$3^{i-1}t$ disjoint $B$--$A_i$--$B$-paths,
and let $\mathcal Q_n$ be a set of at least $3^{n-1}t$ pairwise disjoint paths such that each path has an endvertex in $B$
but is disjoint from $B$ in its interior. 
Then there are sets $\mathcal P_i$ of $B$--$A_i$--$B$-paths for every $i\in[n-1]$
and a set $\cP_n$ of paths such that 
\begin{enumerate}[label=\rm(\roman*)]
\item the paths in $\bigcup_{i=1}^n\mathcal P_i$ are pairwise disjoint;
\item $|\mathcal P_i|\geq t$ for every $i\in[n]$;
\item every path $P\in\bigcup_{i=1}^n\mathcal P_i$ is contained in $\bigcup_{i=1}^n\bigcup_{Q\in\mathcal Q_i}Q$; and
\item $\mathcal P_n\subseteq\mathcal Q_n$, and $|\mathcal P_n|\geq t+|\mathcal Q_n|- 3^{n-1}t$.
\end{enumerate}
\end{lemma}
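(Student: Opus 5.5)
Induction on $n$ is the plan. For $n=1$ there are no $A_i$'s, and we take $\cP_1=\cQ_1$: (i)--(iii) hold trivially, and (iv) reads $|\cP_1|\ge t+|\cQ_1|-t$. For the induction step, the natural move is to treat $\cQ_{n-1}$ and $\cQ_n$ as a two-family problem, and then push the resulting family down as the new ``frozen'' family. More precisely, I would first build from $\cQ_{n-1}$ and $\cQ_n$ a set $\cR$ of disjoint paths, each with an endvertex in $B$ and interior disjoint from $B$, such that $\cR$ contains at least $3^{n-2}t$ paths. Each path of $\cR$ should be either a $B$--$A_{n-1}$--$B$-path or a path of $\cQ_n$, together with bookkeeping that from any $3^{n-2}t$-sized frozen subfamily we can recover enough of each type. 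This is awkward. A cleaner order is to apply the induction hypothesis to $A_1,\dots,A_{n-2}$ with the \emph{frozen} family $\cQ_{n-1}$, but $\cQ_{n-1}$ consists of $B$--$A_{n-1}$--$B$-paths, not just paths with one end in $B$. That is fine, since they do have an endvertex in $B$ and are internally disjoint from $B$. So the key is a two-family lemma, which I describe next.

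\textbf{Two-family step.} Let $\cQ$ be a set of $3k$ disjoint $B$--$A$--$B$-paths and let $\cQ'$ be a set of disjoint paths, each with an end in $B$ and internally disjoint from $B$. I want $k$ disjoint $B$--$A$--$B$-paths in $\bigcup\cQ\cup\bigcup\cQ'$ meeting at most $2k$ (in fact $\le 3k-k$) paths of $\cQ'$, so that the untouched paths of $\cQ'$ give $|\cQ'|-2k\ge |\cQ'|-3k+k$ survivors. The construction goes as follows. Traverse each $Q\in\cQ$ from the $A$-vertex outwards in both directions and stop at the first vertex hitting some path of $\cQ'$. If both directions reach $B$ without hitting $\cQ'$, keep $Q$; it is disjoint from $\cQ'$. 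Otherwise $Q$ yields one or two ``hooks'' from $A$ into $\cQ'$. Each hook into $R\in\cQ'$ can be completed to a $B$--$A$--$B$-path by following $R$ to its $B$-endvertex, combined with the other half of $Q$ (which either reaches $B$ or is a second hook). Use a greedy/counting argument: each $Q$ damages at most two paths of $\cQ'$, and two distinct $Q$'s hooking into the same $R$ conflict, so a conflict graph has maximum degree small enough to select $k$ of the $3k$ paths pairwise compatible while killing at most $2k$ members of $\cQ'$.

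\textbf{Induction bookkeeping.} Order the argument as follows. First apply the two-family step to $\cQ_1$ (size $3^0 t\cdot$) against the union family. More carefully, proceed with $i=1,\dots,n-1$, each time fixing $\cP_i$ inside $\bigcup\cQ_i\cup\bigcup\cQ_n$ and discarding the damaged paths of $\cQ_n$. Doing this sequentially with the remaining families would, however, interfere with $\cQ_j$ for $j\neq i$. To handle this, I treat $\cQ_{i+1},\dots,\cQ_n$ as all frozen: apply induction to $A_1,\dots,A_{n-2}$ with frozen family $\cQ_{n-1}$ shrunk by the two-family step against $\cQ_n$. The factor $3$ per level absorbs the loss: from $3^{n-2}t\cdot 3$ paths of $\cQ_{n-1}$ we obtain enough, and $\cQ_n$ loses at most $2\cdot 3^{n-2}t+\dots$, summing geometrically to at most $3^{n-1}t-t$. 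This yields (iv), and (iii) holds by construction.

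The main obstacle is the two-family step: making the rerouted $B$--$A$--$B$-paths simultaneously disjoint when several hooks enter the same $\cQ'$-path. My first attempt would be to follow each $R\in\cQ'$ from its $B$-end and use only the hook closest to $B$.
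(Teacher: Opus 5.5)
\textbf{The two-family step, which carries all the content of the lemma, is not proved, and the version you state is too weak for your counting.}

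The first-hit/hook construction breaks down. ``Each $Q$ damages at most two paths of $\cQ'$'' bounds how many $R$'s a single $Q$ hooks into. It does not bound how many $Q$'s hook into a single $R$, so the conflict graph can be a clique. Concretely, suppose every $Q_\ell\in\cQ$ first meets $\bigcup\cQ'$ on one side in $R_1$ and on the other side in $R_2$, and afterwards crosses many further paths $S_1,\dots,S_M\in\cQ'$ before reaching $B$. Then:
\begin{itemize}
\item no $Q_\ell$ is untouched;
\item keeping $Q_\ell$ intact damages $M+2$ paths;
\item under your rule ``use only the hook closest to the $B$-end of $R$'' at most one $Q_\ell$ can be completed, and none if the closest hooks on $R_1$ and $R_2$ belong to different $Q_\ell$.
\end{itemize}
Also, if both hooks of one $Q$ enter the same $R$, which may have only one endvertex in $B$, they cannot both be completed along $R$.

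The counting fails as well. $\cQ_{n-1}$ has $3^{n-2}t$ paths, not $3\cdot 3^{n-2}t$. If the step returns only a third of its input, the up to $3^{n-2}t-t$ paths the frozen family may lose in the inductive call can wipe out all the $A_{n-1}$-paths. For $n=2$ you already get only $t/3$ paths.

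\textbf{Missing idea: an extremal rerouting that keeps \emph{all} of $\cQ$ and touches at most $2|\cQ|$ paths of $\cQ'$.}

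Split each $Q_\ell$ at some $a_\ell\in A$ into halves $Q^1_\ell,Q^2_\ell$ ending at $b^1_\ell,b^2_\ell\in B$. Among all such half-path systems inside $\bigcup\cQ\cup\bigcup\cQ'$, with distinct halves meeting only in their common $a_\ell$, choose one minimising the number of edges not in $\bigcup\cQ'$.

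Suppose some $R\in\cQ'$ meets a half-path but ends in no $b^j_\ell$. Let $x$ be the vertex of $R$ on a half-path that is closest to the $B$-end $y$ of $R$, say $x\in Q^j_\ell$. Replacing $Q^j_\ell$ by $a_\ell Q^j_\ell xRy$ preserves all properties. It strictly lowers the count, because $xQ^j_\ell b^j_\ell$ cannot lie inside $R$. Hence only paths of $\cQ'$ ending at one of the $2|\cQ|$ vertices $b^j_\ell$ are touched. Since one half-path is changed at a time, many hooks into one $R$, or both halves into one $R$, cause no trouble.

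This is exactly the paper's argument. It first applies induction to $\cQ_2,\dots,\cQ_n$ with $3t$ in place of $t$, obtaining disjoint $\cR_2,\dots,\cR_n$. It then reroutes the $t$ paths of $\cQ_1$ against $\bigcup_{i\ge2}\cR_i$, so each $\cR_i$ loses at most $2t$ of its $3t$ paths.

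With this strong step your top-down order also works:
\begin{itemize}
\item Reroute all $3^{n-2}t$ paths of $\cQ_{n-1}$ against $\cQ_n$, losing at most $2\cdot 3^{n-2}t$ paths of $\cQ_n$.
\item Apply induction to $A_1,\dots,A_{n-2}$ with the frozen family equal to the union of the rerouted paths and the surviving $\cQ_n$-paths. The survivors must be included, or the new paths may run through them.
\item The frozen family loses at most $3^{n-2}t-t$ paths. This leaves $\ge t$ paths for $A_{n-1}$ and $\ge t+|\cQ_n|-3^{n-1}t$ paths of $\cQ_n$.
\end{itemize}
Your alternative of rerouting $\cQ_1$ against ``the union family'' needs $\cQ_2,\dots,\cQ_n$ to be disentangled first, since their union is not a family of disjoint paths. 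That disentangling is precisely what the paper's inductive call provides.
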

\begin{proof}
We prove the statement by induction on $n$. For $n=1$, the statement is clearly true. So let $n\geq 2$. 
For $i \in [n]\setminus \{1\}$, the set $\cQ_i$ contains $3^{i-2}(3t)$ paths. With a simple index shift we can apply the induction to $\cQ_2,\ldots, \cQ_n$
in order to find sets $\cR_2,\ldots,\cR_{n-1}$
of size $3t$ consisting of $B$--$A_i$--$B$-paths for $i \in [n-1]\setminus \{1\}$
and a set of paths $\cR_n$
such that all paths 
in $\bigcup_{i=2}^n\cR_i$ are pairwise disjoint, contained in $\bigcup_{i=2}^n \bigcup_{Q\in\cQ_i} Q$, 
and such that $\mathcal R_n\subseteq\mathcal Q_n$ has size at least 
\begin{equation}\label{Rnsize}
|\mathcal R_n|\geq 3t+|\mathcal Q_n|-3^{n-2}(3t) = 3t+|\mathcal Q_n|-3^{n-1}t.
\end{equation}

Let $\cQ_1=\{Q_1,\ldots,Q_{t}\}$ and let $a_1,\ldots,a_t\in A_1$ be such that $a_\ell\in V(Q_\ell)$ for $\ell\in [t]$.
If possible, let $a_\ell$ be an endvertex of $Q_\ell$.
Let $\{Q_\ell^1,Q_\ell^2 : \ell \in [t]\}$ be a set of internally disjoint paths such that 
\begin{enumerate}[label=\rm(\roman*)]
\item  for each $\ell\in[t]$, 
if $a_\ell\notin B$ then $Q_\ell^1$ and $Q_\ell^2$ are $a_\ell$--$B$-paths; and if $a_\ell\in B$ 
then $Q_\ell^1=\{a_\ell\}$ is a trivial path and $Q_\ell^2$ is an $a_\ell$--$(B\setminus \{a_\ell\})$-path; \label{Q1}
\item any two distinct paths $Q_\ell^j$ and $Q_{\ell'}^{j'}$ may meet only in $a_\ell$ and only if $\ell=\ell'$; and\label{Q2}
\item $Q_\ell^1,Q_\ell^2\subseteq\bigcup_{i=1}^n \bigcup_{Q\in\cQ_i} Q$.\label{Q3}
\end{enumerate}

Note that splitting each path $Q_\ell\in\cQ_1$ at $a_\ell$ into two subpaths yields such a set. 
Among all such sets, choose a set $\cR_1=\{Q_\ell^1,Q_\ell^2 : \ell \in [t]\}$ such that
\begin{equation}
\sum_{Q\in\mathcal R_1}\Big|E(Q)\setminus \bigcup_{i=2}^n\bigcup_{R\in\mathcal R_i}E( R)\Big|\label{Rmin}
\end{equation}
is minimum. 
Observe that the size of $\cR_1$ is $2t$.
Our aim is to show that for each $i \in [n]\setminus \{1\}$, there are sufficiently many paths in $\cR_i$ that are disjoint from $\cR_1$. 

For $\ell\in [t]$ and $j\in [2]$, let $b^j_\ell$ be such that $Q_\ell^j$ is an $a_\ell$--$b^j_\ell$-path.
We claim that
\begin{equation}\label{path_ending}
	\begin{minipage}[c]{0.8\textwidth}\em
		every path $R\in\bigcup_{i=2}^n\cR_i$ that intersects some path in $\cR_1$ 
		has one of the $b^j_\ell$ as an endvertex. 
	\end{minipage}\ignorespacesafterend 
\end{equation} 
Suppose for a contradiction that there is a path $R\in\bigcup_{i=2}^n\cR_i$ that intersects some path in $\cR_1$ 
but that does not end in any $b^j_\ell$.
Observe that $R$ cannot intersect any trivial path in $\cR_1$, as it would otherwise end there (this is a vertex of $B$,
and $R$ is disjoint from $B$ in its interior) 
and hence end in some $b^j_\ell$.
As every path in $\bigcup_{i=2}^n\mathcal R_i$ has an endvertex in $B$, 
one of the endvertices $y$ of $R$ lies in $B$. 
Let $x$ be the vertex of $R\cap \bigcup_{T\in\cR_1} T$ that is closest to $y$ on $R$,
and let $Q_\ell^j\in\cR_1$ be the path that contains $x$. 

Suppose that $x=y$.
As $y\in B$, the path~$Q_\ell^j$ has to end in $y$, by~\ref{Q1}, and as $y\neq b_\ell^j$, we have $y=a_\ell$.
However, then $a_\ell\in B$ as $y\in B$, and thus~\ref{Q1} implies
$y=b_\ell^{3-j}$, which contradicts the choice of $R$.
We deduce that $x\neq y$.

By the choice of $x$, the subpath $xRy$ is disjoint from any path in $\cR_1\setminus \{Q_\ell^j\}$ and $Q_\ell^j$ intersects this subpath only in $x$. 
We also note that 
\begin{equation}\label{notx}
x\neq b^j_\ell.
\end{equation}
Indeed, $x=b^j_\ell$ would mean that $x\in B$, which then would entail that the path~$R$ contains a vertex of $B$ in its interior,
which is impossible.

The concatenation $P=a_\ell Q_\ell^jxRy$ is an $a_\ell$--$(B\setminus \{a_\ell\})$-path.
The only path in $\cR_1\setminus \{Q_\ell^j\}$ that $P$ intersects is $Q_\ell^{3-j}$, and it does so precisely in $a_\ell$.

We replace $Q_\ell^j$ by $P$ in $\cR_1$ and denote the resulting set by $\cR_1'$.
Observe that $\cR_1'$ satisfies (i)--(iii).
Note that $xQ^j_\ell b^j_\ell\nsubseteq R$ as $R$ does not contain a vertex of $B$ in its interior (and $b_\ell^j\in B$). 
Thus, $xQ^j_\ell b^j_\ell$ contains an edge outside $\bigcup_{i=2}^n\bigcup_{R'\in\mathcal R_i}E(R')$. 

Therefore the path $P$ has fewer edges outside $\bigcup_{i=2}^n\bigcup_{R'\in\mathcal R_i}E(R')$ than $Q_\ell^j$,
which means that $\cR_1'$ is a better choice with respect to~\eqref{Rmin}
than $\mathcal R_1$. This contradiction proves~\eqref{path_ending}.

By~\eqref{path_ending}, it follows that whenever a path $R\in\bigcup_{i=2}^n\cR_i$ intersects some path in~$\cR_1$,
then it ends in some $b^j_\ell$. As there are only~$2t$ of the $b^j_\ell$, we see that at most~$2t$
of the paths in $\bigcup_{i=2}^n\cR_i$ may meet any path in~$\cR_1$ at all. 

Recall that $|\cR_i|= 3t$ for each $i \in [n]\setminus \{1\}$, and note~\eqref{Rnsize}.
Therefore, for each $i \in [n]\setminus \{1\}$, we find a subset $\cP_i$ of $\cR_i$ of cardinality $t$
such that each of the paths in $\cP_i$ is disjoint from every path in $\mathcal R_1$, 
and for $\mathcal R_n$ we even find $t+|\mathcal Q_n|-3^{n-1}t$ such paths. 
Let $\cP_1$ be the set of paths $P_\ell=Q_\ell^1\cup Q_\ell^2$. 
Note that $P_\ell$ is a $B$-path by~(i) and contains $a_\ell\in A_1$.  
So $\cP_1$ contains $t$ disjoint $B$--$A_1$--$B$-paths.
Thus, the paths $\cP_1,\ldots, \cP_n$ are as desired.
\end{proof}

With Lemmas~\ref{ABsortlem} and~\ref{disjlink} in hand, 
we can finish the proofs of Theorems~\ref{connlem}
and~\ref{linkagelem}.
\begin{proof}[Proof of Theorems~\ref{connlem} and~\ref{linkagelem}]
	Given $n$, $s'$ and $t'$, apply Lemma~\ref{connlemX} with $n$, $3^ns', t'$ in the roles of $n, s'$, and $t'$. 
	Then the resulting $A$--$W'$-connectors/$A$-linkages of~$W'$ can 
	be turned into an $\mathcal A$--$W'$-connector/$\mathcal A$-linkage of $W'$
	of size $s'$ with the help of Lemmas~\ref{ABsortlem} and~\ref{disjlink}.
\end{proof}

We may also combine Lemmas~\ref{ABsortlem} and~\ref{disjlink}. 
This is also essential for a combined version of Theorems~\ref{connlem} and~\ref{linkagelem} in Lemma~\ref{monsterlem}.

\begin{lemma}\label{combilem}
Let $n,t$ be positive integers.
Let $B$ be a set of vertices, 
let $\mathcal A$ be a set of $n$ vertex sets, and suppose that, for every $A\in\mathcal A$,
there is a set $\mathcal Q_A$ of at least $3^{2n}t$ disjoint paths
that are either all $A$--$B$-paths or all $B$--$A$--$B$-paths.
Then, for every $A\in\mathcal A$, there exists a set $\mathcal P_A$ of $t$ paths such that
\begin{enumerate}[label=\rm(\roman*)]
\item the paths in $\bigcup_{A\in\mathcal A}\mathcal P_A$ are pairwise disjoint;
\item for every $A\in\mathcal A$, the set $\mathcal P_A$ is a set of $A$--$B$-paths 
if $\mathcal Q_A$ is a set of $A$--$B$-paths, and $\mathcal P_A$ is a set of $B$--$A$--$B$-paths 
if $\mathcal Q_A$ is a set of $B$--$A$--$B$-paths; and
\item every path $P\in\bigcup_{A\in\mathcal A}\mathcal P_A$ lies in $\bigcup_{A\in\mathcal A}\bigcup_{Q\in\mathcal Q_A}Q$.
\end{enumerate}
\end{lemma}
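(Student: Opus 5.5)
Split $\mathcal A$ into $\mathcal A_1$, the sets $A$ for which $\mathcal Q_A$ consists of $A$--$B$-paths, and $\mathcal A_2$, those for which $\mathcal Q_A$ consists of $B$--$A$--$B$-paths; write $n_1=|\mathcal A_1|$ and $n_2=|\mathcal A_2|$. The plan is to disentangle the connector part first with Lemma~\ref{ABsortlem}, and then to treat the whole disentangled connector system as the single ``frozen'' family $\mathcal Q_n$ of Lemma~\ref{disjlink}. The frozen family is then only thinned out, never rerouted, while the $B$--$A$--$B$-paths are rerouted around it.

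\textbf{Step 1.} Apply Lemma~\ref{ABsortlem} to the sets in $\mathcal A_1$. Each $\mathcal Q_A$ has at least $3^{2n}t\ge 2^{n_1}\cdot(n_1 3^{n_2}t)$ paths, so we obtain pairwise disjoint families $\mathcal R_A$, $A\in\mathcal A_1$, of $A$--$B$-paths. Each $\mathcal R_A$ has size $T:=n_1 3^{n_2}t$ (discard surplus paths), and all of them lie inside $\bigcup_{A}\bigcup_{Q\in\mathcal Q_A}Q$.

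\textbf{Step 2.} Put $\mathcal Q^*=\bigcup_{A\in\mathcal A_1}\mathcal R_A$. This is a set of $n_1T$ disjoint paths, each with an endvertex in $B$ and interior disjoint from $B$ (an $A$--$B$-path is internally disjoint from $B$). Apply Lemma~\ref{disjlink} with $n_2+1$ in place of $n$: the sets of $\mathcal A_2$ play the roles $A_1,\dots,A_{n_2}$, and $\mathcal Q^*$ plays the role of $\mathcal Q_{n_2+1}$. The requirement $|\mathcal Q_{A_i}|\ge 3^{i-1}t$ holds since $3^{2n}t$ is much larger. The lemma yields disjoint families $\mathcal P_A$ of $t$ $B$--$A$--$B$-paths for $A\in\mathcal A_2$, all inside the union of the original paths, together with a subset $\mathcal P^*\subseteq\mathcal Q^*$ that is disjoint from them and satisfies
\[
|\mathcal P^*|\ge t+|\mathcal Q^*|-3^{n_2}t.
\]

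\textbf{Step 3.} This step is the only delicate point, because $\mathcal P^*$ might lose paths unevenly across the different $A\in\mathcal A_1$. Since $\mathcal P^*\subseteq\mathcal Q^*$, at most $|\mathcal Q^*|-|\mathcal P^*|\le 3^{n_2}t-t$ paths were removed in total. Hence each $\mathcal R_A$ retains at least $T-3^{n_2}t\ge t$ paths whenever $n_1\ge 2$. If $n_1=1$, we instead run Step~1 with $T=3^{n_2}t+t$, or simply with $T$ large enough that $T-3^{n_2}t\ge t$; the bound $3^{2n}t$ comfortably covers this, since $2^{n_1}(n_1+1)3^{n_2}t\le 3^{2n}t$. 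Choose $t$ of the retained paths of each $\mathcal R_A$ as $\mathcal P_A$.

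Finally, conditions (i)--(iii) of the statement follow directly from the corresponding properties of Lemmas~\ref{ABsortlem} and~\ref{disjlink}. Disjointness across the two types holds because $\mathcal P^*$ is disjoint from the $B$--$A$--$B$ families. Containment holds because everything stays inside the union of the original paths; in particular, $\mathcal Q^*$ already lies in that union, so Lemma~\ref{disjlink}(iii) preserves it. The degenerate cases $n_1=0$ and $n_2=0$ reduce to Lemma~\ref{disjlink} (with an empty $\mathcal Q_n$) and to Lemma~\ref{ABsortlem}, respectively.
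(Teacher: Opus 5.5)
Your proof is correct and is essentially the paper's argument: apply Lemma~\ref{ABsortlem} to the $A$--$B$-families, use their union as the frozen last family in Lemma~\ref{disjlink}, and then count how many paths each family loses. Two small remarks. First, the separate treatment of $n_1=1$ is unnecessary: at most $3^{n_2}t-t$ paths are removed, so $T=3^{n_2}t$ already leaves at least $t$ paths in each family. Second, for $n_1=0$, Lemma~\ref{disjlink} does not literally allow an empty last family, since it requires at least $3^{n-1}t$ paths there and promises $t$ of them; instead, let one of the $B$--$A$--$B$-families play the frozen role, which is allowed because such paths have an endvertex in $B$ and interior disjoint from $B$.
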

\begin{proof}
Let $\mathcal A_1$ be the subset of the $A\in\mathcal A$ for which $\mathcal Q_A$ is a set
of $A$--$B$-paths, and let $\mathcal A_2$ be the subset of the $A\in\mathcal A$
for which $\mathcal Q_A$ is a set of $B$--$A$--$B$-paths.

We apply Lemma~\ref{ABsortlem} to $\mathcal A_1$ with $3^nt$
in the role of $t$. 
Denote the resulting sets of $3^nt$ paths by $\mathcal Q'_{A}$, $A\in\mathcal A_1$,
and put $\mathcal Q'=\bigcup_{A\in\mathcal A_1}\mathcal Q'_{A}$.

Next, we apply Lemma~\ref{disjlink} to the path sets $\mathcal Q_A$, $A\in\mathcal A_2$, together
with $\mathcal Q'$
in the role of the last path set. As a result, for each $A\in\mathcal A_2$, we obtain
a set $\mathcal P_{A}$ of~$t$ disjoint $B$--$A$--$B$-paths, and a set $\mathcal P\subseteq \mathcal Q'$ 
of at least $t+|\mathcal Q'|-3^{n-1}t$ paths such that all these paths are pairwise disjoint. 
Now, for each $A\in\mathcal A_1$, at least 
\[
|\mathcal Q'_{A}|-3^{n-1}t\geq t 
\]
of the $A$--$B$-paths in $\mathcal Q'_{A}$
are retained in $\mathcal P$; let a set of these be denoted by~$\cP_{A}$. 
Then the sets $\mathcal P_{A}$, $A\in\mathcal A$
are as desired.
\end{proof}

\section{Disentangling the endvertices}

The paths in Theorems~\ref{connlem} and~\ref{linkagelem} can even be arranged in a nicer way.
Let $W$ be a wall.
Let $\mathcal P=\{\mathcal P_1,\ldots,\mathcal P_n\}$ be a set such that 
for every $i\in [n]$,
the set $\mathcal P_i$ is either an in-series linkage of $W$ or an $A_i$--$W$-connector for some vertex set~$A_i$.
Then $\mathcal P$ is \emph{clean} if for any two distinct $\mathcal P_i$ and $\mathcal P_j$,
there are disjoint subpaths $P_i$ and $P_j$ of the top row of $W$ such that $P_i$ contains
all endvertices in $W$ of all the paths in~$\mathcal P_i$ and 
$P_j$~contains all the endvertices in~$W$ of all the paths in~$\mathcal P_j$. 
(That is, the paths $P_i$, $P_j$ are required to 
contain only those endvertices that actually lie in the top row.) 

\begin{lemma}\label{cleanlinesslem}
Let $W$ be a wall.
Let $\mathcal P=\{\mathcal P_1,\ldots,\mathcal P_n\}$ be a set such that for every $i\in [n]$,
the set $\mathcal P_i$ is either an in-series linkage of $W$ or an $A_i$--$W$-connector for some vertex set $A_i$.
If $|\mathcal P_i|\geq 2ns$ for every $i\in[n]$, 
then there are subsets $\mathcal P'_i\subseteq\mathcal P_i$
of size $s$ for every $i\in[n]$ such that $\{\mathcal P'_1,\ldots,\mathcal P'_n\}$
is clean.
\end{lemma}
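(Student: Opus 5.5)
We reduce everything to intervals on the top row of $W$ and then carry out a greedy sweep from left to right. For each $\mathcal P_i$ and each path $P\in\mathcal P_i$, we consider the set of endvertices of $P$ lying in the top row. If $\mathcal P_i$ is an $A_i$--$W$-connector, this set is a single nail. If $\mathcal P_i$ is an in-series linkage, it consists of the two endvertices $\ell_P<r_P$, and we associate to $P$ the closed interval $I_P=[\ell_P,r_P]$ of the top row. Because $\mathcal P_i$ is in-series, the intervals of distinct paths of $\mathcal P_i$ are pairwise disjoint and linearly ordered from left to right. In the connector case they are distinct points, again linearly ordered. All these intervals, over all $i$, come from disjoint paths, so their endpoints are pairwise distinct. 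Intervals of different families may nevertheless overlap or nest. Cleanliness requires choosing $s$ members from each family so that the chosen members of different families occupy pairwise disjoint subpaths $P_i$ of the top row, where $P_i$ is the convex hull of the chosen intervals of family $i$.

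We would construct the blocks greedily. We scan the top row from left to right and maintain the set of families still to be handled. At each stage we look, for every remaining family $j$, at the position where its $s$-th remaining interval (in left-to-right order, counting only intervals that lie entirely to the right of the current cut point) ends. We pick the family $i$ for which this position is leftmost, take those $s$ intervals as $\mathcal P'_i$, and move the cut point to the right end of the last chosen interval. Family $i$ is then removed. Each subsequent block lies strictly to the right of the cut, so the hulls are pairwise disjoint and $\{\mathcal P'_1,\ldots,\mathcal P'_n\}$ is clean.

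The main step is to show that each remaining family $j$ loses at most $2s$ intervals per stage. Suppose family $i$ is chosen. By minimality, the $s$-th interval of family $j$ after the old cut ends at or to the right of the new cut. The intervals of family $j$ that are discarded are those that start after the old cut but end before or at the new cut, together with at most one further interval that straddles the new cut. The new cut is the right end of an interval of family $i$, and an interval of family $j$ cannot contain that endpoint, since the endpoints are distinct vertices of disjoint paths. Even so, one interval of $j$ may contain the whole last interval of $i$; that is the straddler. So the loss is at most $s-1$ intervals lying wholly inside the stretch, plus the one straddler, plus possibly the $s$-th interval itself. This is at most $s+1\le 2s$.

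Over at most $n-1$ stages before family $j$ is chosen, family $j$ therefore loses fewer than $2s(n-1)$ intervals, and it still has at least $2ns-2s(n-1)=2s\ge s$ intervals when its turn comes. This guarantees that the greedy choice is always possible. The only delicate point is the bookkeeping for intervals of $j$ that straddle the cut. The hypothesis $|\mathcal P_i|\ge 2ns$ provides exactly the slack needed to absorb one such interval per stage with room to spare.
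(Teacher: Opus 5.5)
Your proof is correct, but it takes a different route from the paper's.

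The paper first splits, for each $i$, the stretch of the top row spanned by $\mathcal P_i$ into $2n$ consecutive disjoint blocks $P_{i,1},\dots,P_{i,2n}$. Each block carries the endvertices of exactly $s$ paths; in-seriesness is what makes this split possible for linkages. It then repeatedly picks a block that properly contains no other remaining block, deletes every block meeting it, and iterates. Such a block can meet at most two blocks of any other family, so each family loses at most two of its $2n$ blocks per round.

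You skip the block preprocessing. You work with the individual intervals and run a dynamic left-to-right sweep with an earliest-finishing rule, charging the losses of each other family directly. This gives a sharper bound than you claim. The $s$-th available interval $J_s$ of another family $j$ ends at or after the new cut, and $J_{s+1}$ starts after $J_s$ ends. Hence only $J_1,\dots,J_s$ can be discarded, so the loss per stage is at most $s$, not $s+1$. Consequently $|\mathcal P_i|\ge ns$ would already suffice. The paper's argument, by contrast, is quicker to state, since it reduces everything to a static selection among $n$ families of $2n$ pairwise disjoint intervals.

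One small point: you invoke that paths from different families are disjoint, so that all endpoints are distinct. The lemma does not assume this, but your count does not need it. If an interval of family $j$ contains the cut vertex, even as its own endpoint, it is simply discarded as the unique straddler. All later hulls then start strictly to the right of the cut, so cleanliness is unaffected.
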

\begin{proof}
For each $i\in[n]$, let $P_i$ be a subpath of the top row of $W$ that contains all 
the endvertices in $W$ of the paths in $\mathcal P_i$. Split $P_i$ into $2n$ subpaths $P_{i,1},\ldots,P_{i,2n}$
such that each $P_{i,j}$ contains the endvertices in $W$ of exactly $s$ of the paths in~$\mathcal P_i$.
(If $\mathcal P_i$ is a linkage, this is possible because it is then an in-series linkage.)
Note that we are done if we can pick a $j_i$ for each $i\in[n]$ such that all the paths 
\[
P_{1,j_1},\ldots, P_{n,j_n}
\]
are pairwise disjoint; but this can be done inductively: 
Pick $i\in [n],j\in [2n]$ such that $P_{i,j}$ does not properly contain some other path $P_{k,\ell}$; 
then $P_{i,j}$
is disjoint from at least $2n-2$ of the paths $P_{i',1},\ldots, P_{i',2n}$ for every $i'\in [n]$. 
Delete all the paths that intersect $P_{i,j}$ and iterate. 
\end{proof}

We also need the following lemma.

\begin{lemma}[Huynh, Joos and Wollan~\cite{HJW19}]\label{pureLink}
Let $L$ be a linkage of a wall $W$ of size $s^3$.
Then $L$ contains a pure linkage $L'$ of size $s$.
\end{lemma}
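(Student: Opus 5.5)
The statement is a purely combinatorial fact about the order of the $2|L|$ endvertices along the top row, so I will forget the graph and work only with the intervals $[\ell_P,r_P]$, $P\in L$. Since the paths in $L$ are disjoint, these $2|L|$ endpoints are distinct. For two paths $P_1,P_2$ with $\ell_1<\ell_2$ there are exactly three possibilities:
\begin{itemize}
\item $r_1<\ell_2$, so the intervals are disjoint (in-series);
\item $\ell_2<r_2<r_1$, so the intervals are nested;
\item $\ell_2<r_1<r_2$, so the intervals cross.
\end{itemize}
A linkage is pure exactly when all pairs have the same type. So we need a set of $s$ intervals out of $s^3$ in which every pair has one common type. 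The plan is to obtain this by two applications of Dilworth's theorem (or Erd\H{o}s--Szekeres), rather than by Ramsey's theorem, which would only give logarithmic bounds.

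First, order the paths by their left endvertex, $\ell_1<\ell_2<\dots<\ell_{s^3}$, and consider the sequence of right endvertices $r_1,\dots,r_{s^3}$. By Erd\H{o}s--Szekeres, a sequence of length $s^3>(s-1)(s^2-1)$ contains either a decreasing subsequence of length $s$ or an increasing subsequence of length $s^2$.

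Suppose the subsequence is decreasing. Then for $i<j$ in it we have $\ell_i<\ell_j$ and $r_j<r_i$. Since $\ell_j<r_j$, this gives $\ell_i<\ell_j<r_j<r_i$, so every pair is nested and we have a nested linkage of size $s$.

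Suppose the subsequence is increasing, of length $s^2$. Then for $i<j$ we have both $\ell_i<\ell_j$ and $r_i<r_j$, so every pair is either in-series ($r_i<\ell_j$) or crossing ($\ell_j<r_i$). Define a partial order on these $s^2$ intervals by letting $P_i\prec P_j$ if $r_i<\ell_j$. This is transitive: if $r_i<\ell_j$ and $r_j<\ell_k$, then $r_i<\ell_j<r_j<\ell_k$. A chain in this order is an in-series linkage. An antichain is a set in which every pair crosses, because incomparability within the increasing subsequence forces $\ell_j<r_i<r_j$. By Dilworth's theorem (or Mirsky's), a poset on $s^2$ elements has a chain or an antichain of size $s$. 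Either one gives a pure linkage $L'\subseteq L$ of size $s$.

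I expect no real obstacle here. The only points needing care are that the endvertices are distinct, which holds because the paths are disjoint, and that within the increasing subsequence the two relations really are exhaustive and the in-series relation is transitive. The size of the wall plays no role beyond ensuring that the nails, and hence $L$, exist.
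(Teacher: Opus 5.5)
Your proof is correct and complete. The paper does not prove this lemma; it imports it from Huynh, Joos and Wollan. So there is no in-paper argument to compare against, and what you have written is a valid self-contained proof.

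Your argument has two steps:
\begin{enumerate}
\item Apply Erd\H{o}s--Szekeres to the right endvertices, with the paths ordered by left endvertex. Since $s^3>(s-1)(s^2-1)$, this yields either $s$ pairwise nested paths or $s^2$ paths that are pairwise in-series or crossing.
\item On those $s^2$ paths, apply Dilworth/Mirsky to the interval order given by $r_i<\ell_j$. Since $s^2>(s-1)^2$, there is a chain or an antichain of size $s$. Chains are in-series linkages, and antichains inside the increasing subsequence are crossing linkages.
\end{enumerate}
The bound $s^3=s\cdot s^2$ is exactly what these two steps consume.

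The delicate points are all handled correctly. Disjointness of the paths makes the $2|L|$ endvertices distinct, so the three cases are exhaustive. The relation $r_i<\ell_j$ is irreflexive and transitive.

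You also read the hypothesis correctly: ``size $s^3$'' refers to $|L|$, consistent with the paper's convention that the size of a linkage is its number of paths. The size of the wall plays no role.
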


\begin{figure}[htb]
\centering
\begin{tikzpicture}

\tikzstyle{oben}=[line width=1.5pt,double distance=1.2pt,draw=white,double=black]
\tikzstyle{tinyvx}=[thick,circle,inner sep=0.cm, minimum size=1mm, fill=white, draw=black]
\tikzstyle{marked}=[line width=2pt,color=dunkelgrau]
\tikzstyle{wed}=[thick,color=dunkelgrau]

\def\wallheight{10}
\def\brickheight{0.2}

\pgfmathtruncatemacro{\lastrow}{\wallheight}
\pgfmathtruncatemacro{\penultimaterow}{\wallheight-1}
\pgfmathtruncatemacro{\lastrowshift}{mod(\wallheight,2)}
\pgfmathtruncatemacro{\lastx}{2*\wallheight+1}

\pgfmathsetmacro{\topy}{\wallheight*\brickheight}

\begin{scope}
\clip[] (\brickheight,\topy-0.75) rectangle (2*\wallheight*\brickheight-\brickheight,\topy+1);

\draw[oben] (3*\brickheight,\topy) arc (180:0:4*\brickheight);
\draw[oben] (5*\brickheight,\topy) arc (180:0:4*\brickheight);

\draw[wed] (\brickheight,0) -- (2*\wallheight*\brickheight+\brickheight,0);
\foreach \i in {1,...,\penultimaterow}{
  \draw[wed] (0,\i*\brickheight) -- (2*\wallheight*\brickheight+\brickheight,\i*\brickheight);
}
\draw[wed] (\lastrowshift*\brickheight,\lastrow*\brickheight) to ++(2*\wallheight*\brickheight,0);

\foreach \j in {0,2,...,\penultimaterow}{
  \foreach \i in {0,...,\wallheight}{
    \draw[wed] (2*\i*\brickheight+\brickheight,\j*\brickheight) to ++(0,\brickheight);
  }
}
\foreach \j in {1,3,...,\penultimaterow}{
  \foreach \i in {0,...,\wallheight}{
    \draw[wed] (2*\i*\brickheight,\j*\brickheight) to ++(0,\brickheight);
  }
}

\draw[edg] (3*\brickheight,\topy) -- ++(\brickheight,0) -- ++(0,-\brickheight);
\draw[edg] (5*\brickheight,\topy) -- ++(\brickheight,0) -- ++(0,-\brickheight);
\draw[edg] (11*\brickheight,\topy) -- ++(2*\brickheight,0);

\node[tinyvx] at (3*\brickheight,\topy){};
\node[tinyvx] at (5*\brickheight,\topy){};
\node[tinyvx] at (11*\brickheight,\topy){};
\node[tinyvx] at (13*\brickheight,\topy){};
\node[tinyvx] at (4*\brickheight,\topy-\brickheight){};
\node[tinyvx] at (6*\brickheight,\topy-\brickheight){};
\end{scope}

\begin{scope}[shift={(5,0)}]
\clip[] (\brickheight,\topy-0.75) rectangle (2*\wallheight*\brickheight-\brickheight,\topy+1.5);

\draw[oben] (3*\brickheight,\topy) arc (180:0:6*\brickheight);
\draw[oben] (5*\brickheight,\topy) arc (180:0:4*\brickheight);

\draw[wed] (\brickheight,0) -- (2*\wallheight*\brickheight+\brickheight,0);
\foreach \i in {1,...,\penultimaterow}{
  \draw[wed] (0,\i*\brickheight) -- (2*\wallheight*\brickheight+\brickheight,\i*\brickheight);
}
\draw[wed] (\lastrowshift*\brickheight,\lastrow*\brickheight) to ++(2*\wallheight*\brickheight,0);

\foreach \j in {0,2,...,\penultimaterow}{
  \foreach \i in {0,...,\wallheight}{
    \draw[wed] (2*\i*\brickheight+\brickheight,\j*\brickheight) to ++(0,\brickheight);
  }
}
\foreach \j in {1,3,...,\penultimaterow}{
  \foreach \i in {0,...,\wallheight}{
    \draw[wed] (2*\i*\brickheight,\j*\brickheight) to ++(0,\brickheight);
  }
}

\draw[edg] (3*\brickheight,\topy) -- ++(\brickheight,0) -- ++(0,-\brickheight);
\draw[edg] (5*\brickheight,\topy) -- ++(\brickheight,0) -- ++(0,-\brickheight);
\draw[edg] (13*\brickheight,\topy) -- ++(2*\brickheight,0);

\node[tinyvx] at (3*\brickheight,\topy){};
\node[tinyvx] at (5*\brickheight,\topy){};
\node[tinyvx] at (15*\brickheight,\topy){};
\node[tinyvx] at (13*\brickheight,\topy){};
\node[tinyvx] at (4*\brickheight,\topy-\brickheight){};
\node[tinyvx] at (6*\brickheight,\topy-\brickheight){};
\end{scope}

\end{tikzpicture}
\caption{How to turn a crossing or nested linkage into one that is in-series}\label{seriesfig}
\end{figure}

We formulate a lemma that combines connectors and linkages. 
For later application, we specify how we interpret a pathological case:
If $W$ is a wall and $\mathcal A=\emptyset$, then we accept an empty path set $\mathcal P=\emptyset$
as a $\mathcal A$--$W$-connector of any size~$s$. Indeed, the definition states 
that there should be a set $\mathcal P_A$ of $s$ disjoint paths for every $A\in\mathcal A$, 
and thus, as there is no $A$ when $\mathcal A=\emptyset$, the condition becomes vacuous. 
In the same way, the empty set $\emptyset$ is an $\emptyset$-linkage of $W$ of any size $s$.

\begin{lemma}\label{monsterlem}
For all positive integers $n,s',t'$, 
there are integers $t=t(n,s',t')$ and $f=f(n,s')$ such that the following holds.
If $\mathcal A_1,\mathcal A_2$ are sets of at most $n$ vertex sets, 
and if $W$ is a wall of size at least~$t$
with $B$ as its set of branch vertices, then either
\begin{enumerate}[label=\rm(\roman*)]
\item there is $A\in\mathcal A_1$ and a vertex set $X$ of size at most~$f$ 
such that $X$ meets every $A$--$B$-path; or
\item there is $A\in\mathcal A_2$ and a vertex set $X$ of size at most~$f$ 
such that $X$ meets every $B$--$A$--$B$-path; or
\item there is a subwall $W'$ of size at least~$t'$ and there 
is an $\mathcal A_1$--$W'$-connector $\mathcal C$ of size at least~$s'$,
and an in-series $\mathcal A_2$--linkage $\mathcal L$ of $W'$ of size at least~$s'$
such that $\mathcal C\cup\mathcal L$ is clean and all paths in $\mathcal C\cup\mathcal L$
are pairwise disjoint.
\end{enumerate}
\end{lemma}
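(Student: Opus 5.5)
We combine the earlier tools: Lemma~\ref{connlemX} to obtain, for each $A$, paths attached to a common subwall; Lemma~\ref{combilem} to make all these paths disjoint; Lemma~\ref{pureLink} together with the rerouting of Figure~\ref{seriesfig} to make every linkage in-series; and finally Lemma~\ref{cleanlinesslem} to make the family clean.

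\textbf{Sizes and first step.} Set $n'=|\mathcal A_1|+|\mathcal A_2|\le 2n$. Choose target sizes backwards, from the last step to the first:
\begin{itemize}
\item $s_3=4ns'$, so that Lemma~\ref{cleanlinesslem} applies to $n'\le 2n$ systems of size $2n's'$;
\item $s_2=(2s_3)^3$, enough to extract a pure linkage of size $2s_3$;
\item $s_1=3^{4n}s_2$, so that Lemma~\ref{combilem} yields size $s_2$ for up to $2n$ sets.
\end{itemize}
Take $f$ and $t$ from Lemma~\ref{connlemX}, applied with $2n$ sets, $s_1$ in place of $s'$, and a slightly larger $t'' \ge t'+2$ in place of $t'$.

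A point of care: Lemma~\ref{connlemX} handles the two alternatives separately, but its proof is per-$A$ after the single application of Lemma~\ref{subwalllem}. Lemma~\ref{subwalllem} produces one subwall $W'$ serving (i) and (ii) simultaneously: we use Menger's theorem for $A\in\mathcal A_1$ and Theorem~\ref{STSpath} for $A\in\mathcal A_2$, and the counting of excluded subwalls only uses the total number of endvertices. The brick-column argument then runs unchanged for each $A$. So, if neither (i) nor (ii) holds, we obtain one subwall $W'$ of size $\ge t''$ together with an $A$--$W'$-connector of size $s_1$ for each $A\in\mathcal A_1$ and an $A$-linkage of size $s_1$ for each $A\in\mathcal A_2$.

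\textbf{Making the paths disjoint.} Let $N'$ be the nail set and apply Lemma~\ref{combilem} in the graph $G-(W'-N')$ with $N'$ in the role of $B$. By~(iii) of that lemma the new paths stay inside the old ones, so they still avoid $W'-N'$. They are therefore $A$--$N'$-paths or $N'$--$A$--$N'$-paths, pairwise disjoint, $s_2$ for each $A$. This gives an $\mathcal A_1$--$W'$-connector and an $\mathcal A_2$-linkage, all mutually disjoint.

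\textbf{Main obstacle: making linkages in-series.} For each $A\in\mathcal A_2$, Lemma~\ref{pureLink} gives a pure sublinkage of size $2s_3$. If it is crossing or nested, convert it as in Figure~\ref{seriesfig}. Pass to a subwall $W''$ of $W'$ obtained by deleting the top row (and a boundary column if needed). Each linkage path is then extended at one end by a short piece of the top row of $W'$ plus the vertical edge down to a nail of $W''$, while the other endpoint goes down along its own column. Rerouting one endpoint of each path to the neighbouring nail in this way turns the crossing or nested pattern into an in-series one. Since distinct paths use distinct top-row segments, these extensions remain disjoint from each other and from the connector paths; the connector paths are extended straight down their columns. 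To make the segments non-overlapping, first keep only every other path (hence the factor $2$ in $2s_3$). The extension must also be done simultaneously for all linkages and connectors, with all new nails branch vertices of $W''$. Doing this carefully, with the adjustment of $t''$ to account for the lost rows and columns, is where most of the work lies.

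\textbf{Cleaning.} Finally apply Lemma~\ref{cleanlinesslem} to the $n'$ systems on $W''$, each of size $s_3\ge 2n's'$. We obtain subsets of size $s'$ forming a clean family. Together these give the connector $\mathcal C$ and the in-series linkage $\mathcal L$ required in~(iii).
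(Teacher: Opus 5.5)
Your pipeline (Lemma~\ref{connlemX}, then Lemma~\ref{combilem}, Lemma~\ref{pureLink}, the rerouting of Figure~\ref{seriesfig}, then Lemma~\ref{cleanlinesslem}) is the paper's. Your remark that the proof of Lemma~\ref{connlemX} gives one common subwall for both connectors and linkages fills in a detail the paper leaves implicit.

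There is, however, a gap at the step you call the main obstacle. You extend each linkage path individually, moving each endpoint by a short top-row piece to a neighbouring nail of $W''$. Such moves keep the relative order of all endpoints along the top row. So a crossing linkage with $\ell_1<\dots<\ell_k<r_1<\dots<r_k$ stays crossing, and a nested one stays nested. No local, path-by-path rerouting can make these paths in-series. Bringing the two ends of a single path close together would need long top-row segments that run over the other paths' endpoints.

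The missing idea, which is what Figure~\ref{seriesfig} shows, is to merge pairs of paths:
\begin{enumerate}
\item In a crossing (resp.\ nested) linkage the right endpoints satisfy $r_1<r_2<\dots$ (resp.\ $r_1>r_2>\dots$). Hence the top-row segment of $W'$ between $r_{2i-1}$ and $r_{2i}$ contains no other endpoint of that linkage.
\item Use this segment, which is no longer part of $W''$, to concatenate $P_{2i-1}$ and $P_{2i}$.
\item Take only the two left endpoints $\ell_{2i-1},\ell_{2i}$ one step right along the top row and down to nails of $W''$.
\end{enumerate}
The result is a single path between two nails of $W''$. It meets $A$, avoids $W''$ in its interior, and has its ends just below $\ell_{2i-1}$ and $\ell_{2i}$. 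The merged paths for $i=1,2,\dots$ are therefore in-series.

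Merging halves the linkage, and that is the actual reason for the factor $2$ in $2s_3$, not ``keeping every other path''. With this replacement your size bookkeeping goes through unchanged.
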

Note that~(i) cannot happen if $\mathcal A_1=\emptyset$: Indeed,~(i) postulates the existence of some $A\in\mathcal A_1$. 
In the same way,~(ii) is impossible if $\mathcal A_2=\emptyset$.
\begin{proof}
The proof is very similar to the proof of Theorems~\ref{connlem} and~\ref{linkagelem}.
First use Lemma~\ref{connlemX} to obtain $\mathcal A_1$--$W'$-connectors for each $A_1\in\cA_1$ and $A_2$--$W'$-linkages for each $A_2\in \cA_2$. 
Via Lemma~\ref{combilem}, we turn these path systems into disjoint path systems.
Afterwards, we apply Lemma~\ref{pureLink} to find pure $A_2$--$W'$-linkages.
As indicated in Figure~\ref{seriesfig}, we turn the pure linkages into in-series linkages ($W'$ loses its top row, and we proceed with a $W'$ whose size has decreased by $1$; we prolong the other paths/linkages).
Finally, we clean the connectors and linkages with Lemma~\ref{cleanlinesslem}. 
If $\mathcal A_1=\emptyset$, we set $\mathcal C=\emptyset$, and if $\mathcal A_2=\emptyset$, we put $\mathcal L=\emptyset$. 
\end{proof}

Instead of limiting ourselves to only the extreme cases 
where we either find connectors and linkages for all of $\mathcal A_1$ and $\mathcal A_2$ or disconnect just one of the sets in either $\mathcal A_1$ or $\mathcal A_2$,
we can also observe the correctness for intermediate cases. Essentially, we can find some sets in $\mathcal A_1$ and $\mathcal A_2$ that can be disconnected while all other sets can be linked nicely.

\begin{theorem}\label{labeltheorem}
For all positive integers $n,s',t'$, 
there are integers $t=t(n,s',t')$ and $f=f(n,s')$ such that the following holds.
If $\mathcal A_1,\mathcal A_2$ are sets of at most $n$ vertex sets, 
and if $W$ is a wall of size at least~$t$, then there is a subwall $W'$ of size at least~$t'$
with $B$ as its set of branch vertices and subsets $\mathcal A_1' \subseteq \mathcal A_1$ and 
$\mathcal A_2' \subseteq \mathcal A_2$ such that 

\begin{enumerate}[label=\rm(\roman*)]
\item there is a vertex set $X$ of size at most $f$ such that $X$ meets every $A$--$B$-path for $A \in \mathcal A_1'$ 
and every $B$--$A$--$B$-path for $A \in \mathcal A_2'$, and such that $X$ is disjoint from $W'$; 
\item there is an $(\mathcal A_1 \setminus \mathcal A_1')$--$W'$-connector $\mathcal C$ of size at least~$s'$,
and an in-series $(\mathcal A_2 \setminus \mathcal A_2')$--linkage $\mathcal L$ of $W'$ of size at least~$s'$
such that $\mathcal C\cup\mathcal L$ is clean and all paths in $\mathcal C\cup\mathcal L$
are pairwise disjoint.
\end{enumerate}
\end{theorem}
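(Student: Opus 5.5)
We obtain Theorem~\ref{labeltheorem} by iterating Lemma~\ref{monsterlem}. Each round either succeeds outright or moves one set into $\mathcal A_1'\cup\mathcal A_2'$ at the price of a small separator. Since $|\mathcal A_1|+|\mathcal A_2|\le 2n$, there are at most $2n$ rounds. We fix the parameters backwards. Set $t_{2n}=t'$. Given $t_{k+1}$, choose $t_k$ so large that Lemma~\ref{monsterlem} applies with parameters $n,s',\tilde t_{k+1}$, where $\tilde t_{k+1}$ is large enough that a wall of size $\tilde t_{k+1}$ minus any vertex set of size at most $2nf_0$ still contains a subwall of size $t_{k+1}$ avoiding that set. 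Here $f_0=f(n,s')$ is from Lemma~\ref{monsterlem}, and a grid splitting as in the proof of Lemma~\ref{subwalllem} shows this is possible, e.g.\ $\tilde t_{k+1}\ge 100(2nf_0)^2t_{k+1}$. Put $t=t_0$ and $f=2nf_0$.

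In round $k$ we have a current wall $W_k$ of size at least $t_k$, current families $\mathcal A_1^{(k)}\subseteq\mathcal A_1$ and $\mathcal A_2^{(k)}\subseteq\mathcal A_2$ of sets not yet removed, and a set $X_k$ of at most $kf_0$ vertices disjoint from $W_k$. We apply Lemma~\ref{monsterlem} to $G-X_k$ with the wall $W_k$; working in $G-X_k$ is legitimate because $W_k$ avoids $X_k$. If outcome (iii) occurs, we stop. We take $W'$ from (iii), set $\mathcal A_j'=\mathcal A_j\setminus\mathcal A_j^{(k)}$ and $X=X_k$, and read off $\mathcal C$ and $\mathcal L$. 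If outcome (i) or (ii) occurs for some $A$, we obtain $Y$ with $|Y|\le f_0$ meeting all relevant paths for $A$ in $G-X_k$, with respect to the branch vertices $B_k$ of $W_k$. We remove $A$, set $X_{k+1}=X_k\cup Y$, and choose as $W_{k+1}$ a subwall of $W_k$ of size at least $t_{k+1}$ that avoids $X_{k+1}$. The process terminates, since once both families are empty, (iii) holds vacuously with empty $\mathcal C$ and $\mathcal L$, following the convention stated before Lemma~\ref{monsterlem}.

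The main point to check is that separation is inherited when we pass to subwalls. The set $X$ must meet every $A$--$B$-path, respectively every $B$--$A$--$B$-path, where $B$ is the branch vertex set of the final $W'$, not of the wall $W_k$ at the round in which $A$ was removed. For $A$--$B$-paths this is straightforward. The branch vertices of $W'$ are branch vertices of $W_k$, except possibly vertices on the boundary of $W'$, which have degree $3$ in $W_k$ anyway. An $A$--$B'$-path avoiding $X$ therefore contains an $A$--$B_k$-path avoiding $X_k\cup Y$, a contradiction. For $B'$--$A$--$B'$-paths one must show that such a path avoiding $X$ yields a $B_k$--$A$--$B_k$-path avoiding $X_k\cup Y$. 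The path has both endpoints in $B'\subseteq V(W_k)$. One extends or shortens it to the first vertex of $B_k$ on each side of the $A$-vertex. The shortened path still contains the $A$-vertex unless that vertex itself is in $B_k$, and then it is a trivial-type endpoint and is fine. If some inner vertex is a wall vertex but not a branch vertex, we walk along the subdivided edge inside $W'$, which avoids $X$, to reach $B'$. This needs a little care, and I expect this inheritance step to be the main obstacle. We must use that every vertex of $W'$ lies on a path of $W'$ between two branch vertices, and that $W'$ is disjoint from $X$. It may be cleaner to define the branch vertex set $B$ in the statement as a subset of the $B_k$'s and to verify the claim directly with the rerouting argument of Lemma~\ref{reroutelem}.

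Finally, $X$ is disjoint from $W'$ by construction, and $|X|\le 2nf_0=f$. The sizes $t$ and $f$ depend only on $n,s',t'$, and $f$ depends only on $n,s'$, as required.
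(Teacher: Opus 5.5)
Your proposal is correct and is essentially the paper's proof: iterate Lemma~\ref{monsterlem} in $G-X_k$ for at most $2n$ rounds, add the separator of size at most $f_{\ref{monsterlem}}(n,s')$ to $X$ whenever outcome (i) or (ii) occurs, pass by grid splitting to a subwall avoiding it, and stop at outcome (iii) or when both families are empty. The inheritance step you flag is handled in the paper by the single observation $B'\subseteq B_k$, which holds because degree-$3$ vertices of a subwall have degree $3$ in the ambient wall. Given this inclusion, every $A$--$B'$-path (resp.\ $B'$--$A$--$B'$-path) contains an $A$--$B_k$-path (resp.\ a $B_k$--$A$--$B_k$-path, obtained by cutting at the $B_k$-vertices nearest the $A$-vertex) as a subpath, so no extending or walking along $W'$ is needed.
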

Note that we view the empty set as clean, so that~(ii) also makes sense if $\mathcal C=\mathcal L=\emptyset$. 
Moreover, observe that if $\mathcal A'_1=\emptyset$ and $\mathcal A'_2=\emptyset$, then the empty set will do as $X$.

\begin{proof}
Let $$t = t(n,s', t') = t_{\ref{monsterlem}}(n,s',t')\cdot \left( 3f_{\ref{monsterlem}}(n, s') \right)^{n}$$ and 
$$f=f(n, s') = 2n\cdot f_{\ref{monsterlem}}(n, s'),$$ 
where $t_{\ref{monsterlem}}$ and $f_{\ref{monsterlem}}$ are the respective functions from Lemma~\ref{monsterlem}.

Let $G$ be a graph with a wall $W$ of size at least $t$. We start with $\mathcal A_i^0 = \emptyset$ for $i\in [2]$, $X_0=\emptyset$ and $W_0 = W$. 
We will iteratively find subwalls $W_j$ of $W$, vertex sets $X_j$ and subsets $\mathcal A_1^j\subseteq\mathcal A_1$ 
and $\mathcal A_2^j\subseteq\mathcal A_2$, and
we will write $B_j$ for the 
set of branch vertices of $W_j$.
To generate these sets, 
we iteratively apply Lemma~\ref{monsterlem} to $G-X_j$, $W_j$, $\mathcal A_1\sm\mathcal A_1^j$ and $\mathcal A_2\sm\mathcal A_2^j$ 
until outcome~(iii) of Lemma~\ref{monsterlem} occurs, or until $\mathcal A_1\sm\mathcal A_1^j=\mathcal A_2\sm\mathcal A_2^j=\emptyset$.
In this process, we will guarantee that, in the $j$-th iteration, the following conditions are satisfied:
\begin{enumerate}[label=\rm(\alph*)]
    \item the set $X_j$ has size $|X_j|\leq j\cdot f_{\ref{monsterlem}}(n, s')$ and meets every $A$--$B_j$-path for $A \in \mathcal A_1^j$ 
and every $B_j$--$A$--$B_j$-path for $A \in \mathcal A_2^j$; \label{iti}
	\item the subwall $W_j$ of $W$ has size at least $t_{\ref{monsterlem}}(n,t',s')\cdot \left(3f_{\ref{monsterlem}}(n, s')\right)^{n-j/2}$
	and is disjoint from $X_j$; and\label{itii}
	\item $|\mathcal A_1^{j}|+|\mathcal A_2^{j}|= j$.\label{itiii} 
\end{enumerate}

As the process ends at the latest when $\mathcal A_1\sm\mathcal A_1^j$ and $\mathcal A_2\sm\mathcal A_2^j$ become empty, 
the process will end after at most $2n$ iterations. 

For $j=0$, the above conditions are true, so let $j\geq 1$. 
Applying Lemma~\ref{monsterlem} to $G-X_{j-1}$, $W_{j-1}$, 
 $\mathcal A_1\sm\mathcal A_1^{j-1}$, $\mathcal A_2\sm\mathcal A_2^{j-1}$, $n,s'$ and $t'$ 
yields one of the three outcomes~(i), (ii), and~(iii) of the lemma.
Note that the size of $W_{j-1}$ is large enough in any iteration.

First, assume that outcome~(i) or~(ii) of Lemma~\ref{monsterlem} occurs. 
That is, there is a set $X'$ of size $|X'|\leq f_{\ref{monsterlem}}(n, s')$ 
and an element
\[
A_j \in (\mathcal A_1\sm \mathcal A_1^{j-1})\cup (\mathcal A_2\sm\mathcal A_2^{j-1})
\]
such that $X'$ meets either every $A_j$--$B_{j-1}$-path or every $B_{j-1}$--$A_{j}$--$B_{j-1}$-path
in the graph $G-X_{j-1}$.

We put $X_j=X_{j-1}\cup X'$ and set $\mathcal A_1^j=\mathcal A_1^{j-1}\cup\{A_j\}$ and $\mathcal A_2^j=\mathcal A_2^{j-1}$,
or $\mathcal A_2^j=\mathcal A_2^{j-1}\cup\{A_j\}$ and $\mathcal A_1^j=\mathcal A_1^{j-1}$, depending on whether $A_j$ came 
from $\mathcal A_1$ or from $\mathcal A_2$.
This already ensures~\ref{itiii},
and~\ref{iti} with $B_{j-1}$ in the role of $B_j$. Since we will find $W_j$ as a subwall of $W_{j-1}$, 
we have $B_{j}\subseteq B_{j-1}$, and~\ref{iti} will also be true for $B_j$. 

To find the subwall $W_j$, we split $W_{j-1}$ in a grid-like fashion 
into $2 f_{\ref{monsterlem}}(n, s')$ disjoint
subwalls of size at least $t_{\ref{monsterlem}}(n,t',s')\cdot (3f_{\ref{monsterlem}}(n, s'))^{n-j/2}$.
This is possible as $W_{j-1}$ has size at least $t_{\ref{monsterlem}}(n,t',s')\cdot (3f_{\ref{monsterlem}}(n, s'))^{n-(j-1)/2}$,
by~\ref{itii}. Since $|X'|\leq f_{\ref{monsterlem}}(n, s')$, one of these subwalls is disjoint from
$X'$; pick one to be $W_j$. Note that $W_j$ is also disjoint from $X_j=X_{j-1}\cup X'$, as $W_{j-1}$
is disjoint from $X_{j-1}$ by~\ref{itii}. This~proves~\ref{itii} for $W_j$.

If, in every iteration, outcome~(ii) or~(iii) occurs, and the process ends with 
$\mathcal A_1\sm\mathcal A_1^{2n}=\emptyset$ and $\mathcal A_2\sm\mathcal A_2^{2n}=\emptyset$, 
then we set $\mathcal A'_1=\mathcal A_1^{2n}$, $\mathcal A'_2=\mathcal A_2^{2n}$, $X=X_{2n}$, $W'=W_{2n}$
and $\mathcal C=\mathcal L=\emptyset$. By~\ref{iti} and~\ref{itii}, the conclusions 
of the theorem hold true.

\medskip 
What happens if the application of Lemma~\ref{monsterlem} yields outcome~(iii) in iteration~$j$?
Then, we find a subwall $W'$ of $W_{j-1}$ of size at least $t'$,  
an $(\mathcal A_1\sm\mathcal A_1^{j-1})$--$W'$-connector $\mathcal C$ of size at least~$s'$,
and an in-series $(\mathcal A_2\sm\mathcal A_2^{j-1})$--linkage $\mathcal L$ of $W'$ of size at least~$s'$
such that $\mathcal C\cup\mathcal L$ is clean and all paths in $\mathcal C\cup\mathcal L$
are pairwise disjoint. 
We define $X=X_{j-1}$, $\mathcal A_1' = \mathcal A_1^{j-1}$ and $\mathcal A_2' = \mathcal A_2^{j-1}$. 
Again, we are done with~\ref{iti} and~\ref{itii}.
\end{proof}

\bibliographystyle{amsplain}
\bibliography{erdosposa}

\vfill

\small
\vskip2mm plus 1fill
\noindent
Version \today{}
\bigbreak

\noindent
Henning Bruhn {\tt <henning.bruhn@uni-ulm.de>}\\
Arthur Ulmer\\
Institut f\"ur Optimierung und Operations Research\\
Universit\"at Ulm, Ulm\\
Germany\\

\bigskip

\noindent
Felix Joos
{\tt <joos@informatik.uni-heidelberg.de>}\\
Institut f\"ur Informatik\\
Universit\"at Heidelberg, Heidelberg\\
Germany\\

\end{document}